\setlist[enumerate]{label=\textnormal{(\roman*)}}
\definecolor{britishracinggreen}{rgb}{0.0, 0.26, 0.15}
\definecolor{cobalt}{rgb}{0.0, 0.28, 0.67}
\DeclareSymbolFont{usualmathcal}{OMS}{cmsy}{m}{n}
\DeclareSymbolFontAlphabet{\mathcal}{usualmathcal}
\DeclareMathAlphabet\BCal{OMS}{cmsy}{b}{n}
\numberwithin{equation}{section}
\newcommand\dash{\nobreakdash-\hspace{0pt}}
\def\into{\hookrightarrow}
\def\onto{\twoheadrightarrow}
\def\Sets{\mathrm{Sets}}
\def\LL{\mathbf{L}}
\newcommand\Ocal{\ensuremath{\mathcal{O}}}
\newcommand\Xcal{\ensuremath{\mathcal{X}}}
\newcommand\Ycal{\ensuremath{\mathcal{Y}}}
\newcommand\fTd{\ensuremath{\mathrm{fTd}}}
\newcommand\bounded{\ensuremath{\mathrm{b}}}
\DeclareMathOperator\derived{\mathbf{D}}
\DeclareMathOperator\Inj{Inj}
\DeclareMathOperator{\QCoh}{Qcoh}
\DeclareMathOperator\Qcoh\QCoh %
\DeclareMathOperator\coh{coh}
\DeclareMathOperator\KKK{\ensuremath{\mathbf{K}}}
\DeclareMathOperator{\Perf}{Perf}
\DeclareMathOperator\identity{id}
\DeclareMathOperator{\Spf}{Spf}
\DeclareMathOperator{\Aff}{Aff}
\DeclareMathOperator{\op}{op}
\DeclareMathOperator\obstr{\mathfrak{o}}
\DeclareMathOperator{\Spec}{Spec}
\DeclareMathOperator{\Ind}{Ind}
\DeclareMathOperator{\Pro}{Pro}
\DeclareMathOperator{\Proj}{Proj}
\DeclareMathOperator{\Ext}{Ext}
\DeclareMathOperator{\Hom}{Hom}
\DeclareMathOperator{\RRHom}{\mathbf{R}Hom}
\DeclareMathOperator{\RRlHom}{\mathbf{R}\kern-0.025em\mathscr{H}\kern-0.3em\textit{om}}
\DeclareMathOperator{\lHom}{\mathscr{H}\kern-0.3em\textit{om}}
\DeclareMathOperator{\End}{End}
\DeclareMathOperator{\pr}{pr}
\DeclareMathOperator\HH{H}
\renewcommand{\geq}{\geqslant}
\declaretheoremstyle[spaceabove = 3pt, spacebelow = 3pt, bodyfont = \itshape]{theorem}
\declaretheoremstyle[spaceabove = 3pt, spacebelow = 3pt]{remark}
\declaretheorem[style=theorem, numberwithin=section]{theorem}
\declaretheorem[style=theorem, sibling=theorem]{lemma}
\declaretheorem[style=theorem, sibling=theorem]{proposition}
\declaretheorem[style=remark, sibling=theorem]{definition}
\declaretheorem[style=remark, sibling=theorem]{example}
\declaretheorem[style=remark, sibling=theorem]{remark}
\declaretheorem[style=theorem, numberwithin=section, title=Theorem]{alphatheorem}
\declaretheorem[style=theorem, sibling=alphatheorem, title=Corollary]{alphacorollary}
\crefname{alphatheorem}{Theorem}{Theorems}
\crefname{alphaconjecture}{Conjecture}{Conjectures}
\crefname{alphacorollary}{Corollary}{Corollaries}
\crefname{alphaproposition}{Proposition}{Propositions}
\newcommand*{\isoarrow}[1]{\arrow[#1,"\rotatebox{90}{\(\sim\)}"
]}
\tikzset{commutative diagrams/.cd,
mysymbol/.style={start anchor=center,end anchor=center,draw=none}}
\newcommand\MySymb[2][\square]{%
\arrow[mysymbol]{#2}[description]{#1}}
\tikzset{
  shift up/.style={
    to path={([yshift=#1]\tikztostart.east) -- ([yshift=#1]\tikztotarget.west) \tikztonodes}
  }
}
\DeclareMathAlphabet{\mathpzc}{OT1}{pzc}{m}{it}
\newcommand*{\defeq}{\mathrel{\vcenter{\baselineskip0.5ex \lineskiplimit0pt
  \hbox{\scriptsize.}\hbox{\scriptsize.}}}%
=}
\DeclareMathOperator\lift{L}
\def\SL@eqntext#1{\rlap{\quad{\showlabelsetlabel{\SL@prlabelname{#1}}}}}
\DeclareMathOperator{\dec}{\mathsf{DEC}}
\newcommand{\id}{\operatorname{id}}
\newcommand{\simto}{\,\widetilde{\to}\,}
\newcommand{\cA}{\ensuremath{\mathcal{A}}}
\newcommand{\cB}{\ensuremath{\mathcal{B}}}
\newcommand{\cC}{\ensuremath{\mathcal{C}}}
\newcommand{\cE}{\ensuremath{\mathcal{E}}}
\newcommand{\cO}{\ensuremath{\mathcal{O}}}
\newcommand{\cX}{\ensuremath{\mathcal{X}}}
\newcommand{\cY}{\ensuremath{\mathcal{Y}}}
\newcommand{\bZ}{\ensuremath{\mathbb{Z}}}
\newcommand{\Ibar}{\overline{I}}
\newcommand{\bfk}{\mathbf{k}}
\definecolor{caribbeangreen}{rgb}{0.0, 0.8, 0.6}
\definecolor{brightpink}{rgb}{1.0, 0.0, 0.5}
\begin{document}

\title{Deformation theory for a morphism in the derived category with fixed lift of the codomain}
\author{Pieter Belmans \and Wendy Lowen \and Shinnosuke Okawa \and Andrea T.\ Ricolfi}

\maketitle

\begin{abstract}
  We develop the deformation-obstruction calculus
  for morphisms of complexes
  with a fixed lift of the codomain,
  to derived categories of flat nilpotent deformations of abelian categories.
  As an application, we give an alternative proof
  that semiorthogonal decompositions deform uniquely
  in smooth proper families of schemes.
\end{abstract}

{
  \hypersetup{linkcolor=black}
  \setcounter{tocdepth}{3}
  \tableofcontents
}

\section{Introduction}
\label{section:introduction}
The deformation theory of abelian categories,
and a deformation-obstruction calculus describing it,
was introduced in \cite{MR2183254,MR2238922}.
Subsequently,
the deformation theory for lifting \emph{objects} to
(derived categories of) flat nilpotent deformations of abelian categories
was developed in \cite{MR2175388}.
In this paper we work out the next step:
the deformation-obstruction calculus
for lifting \emph{morphisms} of complexes
with a fixed lift of the codomain,
to derived categories of flat nilpotent deformations of abelian categories.

As an application of this novel deformation theory,
we give an alternative proof of
the uniqueness of deformations of semiorthogonal decompositions,
which is a key ingredient in \cite{2002.03303}.

\paragraph{Setup and notation}
Because it is a generalisation of the deformation-obstruction theory from \cite{MR2175388},
we will use the notation and terminology from op.~cit.
We fix a sequence of surjections of noetherian (or coherent) rings
\begin{equation}
  \label{equation:rings}
  \overline{R}\twoheadrightarrow R\twoheadrightarrow R_0
\end{equation}
where we denote\footnote{These are $I$ and $J$ in \cite{MR2175388};
  we use different symbols, to avoid confusion with our notation for injective objects.
}~$\ker(\overline{R}\twoheadrightarrow R_0)=\mathfrak{a}$
and~$\ker(\overline{R}\to R)= \mathfrak{b}$.
We moreover assume that~$\overline{R}\twoheadrightarrow R$
is a \emph{small extension} relative to~$R\twoheadrightarrow R_0$,
i.e.,~that~$\mathfrak{a}\mathfrak{b}=0$.
This implies that~$\mathfrak{b}^2=0$,
and that~$\mathfrak{b}$ has the structure of an~$R_0$\dash module.

We consider an~$R_0$\dash linear abelian category~$\mathcal{C}_0$,
which is assumed to be flat in the sense of \cite[Definition~3.2]{MR2238922}.
Depending on the situation, we will often moreover assume it is either Grothendieck or co-Grothendieck.
Then we consider a flat abelian deformation~$\mathcal{C}$ of~$\mathcal{C}_0$ over~$R$,
and a further flat abelian deformation~$\overline{\mathcal{C}}$ over~$\overline{R}$,
in the sense of \cite[\S5]{MR2238922}.

Each of the functors~$\mathcal{C}_0\to\mathcal{C}$ and~$\mathcal{C}\to\overline{\mathcal{C}}$
has two associated \emph{(left and right) restriction functors}:
its left adjoint given by the tensor product functor and its right adjoint given by the Hom functor.
We can summarise the situation in the following diagram
\begin{equation}
  \label{equation:deformations}
  \begin{tikzcd}[row sep=large]
    \overline{R}\colon & & \overline{\mathcal{C}} \arrow[d, bend right,
    "R\otimes_{\overline{R}}-"']\arrow[d, bend left, "{\Hom_{\overline{R}}(R,-)}"] \\
    R\colon &  & \mathcal{C} \arrow[d, bend right, "R_0\otimes_R-"'] \arrow[d, bend left,
    "{\Hom_R(R_0,-)}"] \arrow[u] \\
    R_0\colon & & \mathcal{C}_0 \arrow[u]
  \end{tikzcd}
\end{equation}

The following prototypical example will be the setting for our geometric applications.
\begin{example}
  \label{example:geometric-setting}
  Consider rings as in \eqref{equation:rings}.
  Let~$\overline{X} \to \Spec \overline{R}$ be
  a flat morphism of quasicompact and semiseparated schemes,
  and~$X \to \Spec R$, resp.~$X_0 \to \Spec R_0$ be its base changes.
  Then
  \begin{equation}
    \cC_0 \defeq \Qcoh X_0
    \hookrightarrow
    \cC \defeq \Qcoh X
    \hookrightarrow
    \overline{\cC} \defeq \Qcoh \overline{X}
  \end{equation}
  are flat Grothendieck abelian categories linear over~$R_0,R,\overline{R}$, respectively,
  where the inclusion functors are given by pushforwards of quasicoherent sheaves.
  The \emph{left} restriction functors~$R\otimes_{\overline{R}}-$ and~$R_0\otimes_R-$,
  in this context are simply the pullbacks of quasicoherent sheaves along the inclusions.
\end{example}

Notice that the right restriction functors preserve injectives,
while the left restriction functors preserve projectives.
We are interested in lifting objects (or complexes) and morphisms along these restriction functors,
and following \cite{MR2175388} our approach will be to replace the complexes of our interest
by appropriate complexes of injective or projective objects.
If the abelian categories~$\mathcal{A} = \mathcal{C}_0, \mathcal{C}, \overline{\mathcal{C}}$
under consideration do not have enough injectives (resp.~projectives),
one can consider their associated Ind-completions $\Ind(\mathcal{A})$
(resp.~Pro-completions $\Pro(\mathcal{A}) = \Ind(\mathcal{A}^{\op})^{\op}$).
Assuming~$\mathcal{A}$ to be small (as we may do so by tacitly enlarging the universe),
these completions are Grothendieck (resp. co-Grothendieck) categories
and hence have enough injectives (resp.~projectives).
When working with Grothendieck categories from the start,
this intermediate step is unnecessary for lifting along the right restriction functors.
However, as \cref{example:geometric-setting} shows,
in algebraic geometry it is often more natural to lift along the left restriction functors
and in this case one does need the intermediate step of considering Pro-completions.

\paragraph{Deforming morphisms in the homotopy category of injectives}
Similar to the approach in \cite{MR2175388}
we will first prove results for homotopy categories of linear categories
and their deformations
(in \cref{section:KInj-deformations}),
and then restrict to appropriate subcategories which decribe the derived categories of our interest
(in \cref{section:restriction}).
To do so, we will appeal to the comparison machinery of \cite[\S6]{MR2175388}
which explains how lift groupoids for subcategories are related to
lift groupoids for the ambient categories.

Our first theorem is the following deformation-obstruction result,
which will be proven in \cref{section:KInj-deformations}.
It is phrased in the setting of Grothendieck categories and right restriction functors,
which is also the setting used in op.~cit.~(but is dual to the setting we are eventually interested in).

In what follows, the notation~$\KKK(-)$
refers to the homotopy category of complexes of an additive category (when applied to a category)
or the functor induced between the homotopy categories (when applied to a functor).

\begin{alphatheorem}
  \label{theorem:right-adjoint-restriction}
  Let~$\mathcal{C}_0$ be a Grothendieck abelian category.
  Consider an exact triangle
  \begin{equation}
    F_0\overset{s_0}{\to}G_0\to H_0\to F_0[1]
  \end{equation}
  in~$\KKK(\Inj\mathcal{C}_0)$, and let
  \begin{equation}
    F\overset{s}{\to}G
  \end{equation}
  be a morphism in~$\KKK(\Inj\mathcal{C})$ which restricts to~$s_0$, i.e.~$\KKK(\Hom_R(R_0,s))=s_0$.

  Let~$\overline{G}$ be a lift of~$G$ from~$\KKK(\Inj\mathcal{C})$ to~$\KKK(\Inj\overline{\mathcal{C}})$
  along~$\KKK(\Hom_{\overline{R}}(R,-))$.
  \begin{enumerate}
    \item
      \label{item:right-adjoint-obstruction}
      There exists an obstruction class
      \begin{equation}
        \obstr(s)\in \HH^1(\RRHom_{\mathcal{C}_0}(\RRHom_{R_0}( \mathfrak{b},F_0),H_0))
      \end{equation}
      such that~$\obstr(s)=0$ if and only if~$s$ lifts to a
      morphism~$\overline{s}\colon\overline{F}\to\overline{G}$ in~$\KKK(\Inj\overline{\mathcal{C}})$.
    \item
      \label{item:right-adjoint-torsor}
      Assume that~$\HH^{-1}(\RRHom_{\mathcal{C}_0}(\RRHom_{R_0}( \mathfrak{b},F_0),H_0))=0$.
      If~$\obstr(s)=0$,
      then the set of isomorphism classes of such lifts is a torsor under
      \begin{equation}
        \HH^0(\RRHom_{\mathcal{C}_0}(\RRHom_{R_0}( \mathfrak{b},F_0),H_0)).
      \end{equation}
  \end{enumerate}
\end{alphatheorem}

The~$\HH^{-1}$\dash vanishing is a shadow of a derived deformation theory involving higher structures,
and is also present in \cite{MR2175388}.
We refrain from developing it here,
because we do not need it for our application (and likely our non-homotopical tools are not the best to deal with it).

\begin{remark}
  \label{remark:recover-deformation-theory-complexes}
  \cref{theorem:right-adjoint-restriction} recovers
  the deformation theory of objects from \cite[Theorem~5.7]{MR2175388}
  (in the version for the homotopy category of injectives), by considering the triangle
  \begin{equation}
    C_0\to 0\to C_0[1]\xrightarrow[\id_{C_0[1]}]{+1}C_0[1]
  \end{equation}
  and where the fixed lift of the zero object is the zero object.
\end{remark}

For our purposes we will need the result which is dual to \cref{theorem:right-adjoint-restriction},
concerning lifts along the \emph{left} restriction functor (given by the tensor product).
This is stated in \cref{theorem:left-adjoint-restriction}.

\paragraph{Deforming morphisms in the derived category}
\Cref{theorem:right-adjoint-restriction,theorem:left-adjoint-restriction}
are already useful results in an abstract and general setting.
However,
as in \cref{example:geometric-setting} we are interested
in using the left restriction functor,
yet,
categories of quasicoherent sheaves
are not co-Grothendieck.
One can take the Pro-completion of the category of quasicoherent sheaves,
making it co-Grothendieck,
but then one needs to understand the compatibility between
the deformation theory in the original derived category
and the derived category of this Pro-completion.
This is worked out for objects
in \cite[\S6]{MR2175388}.
In \cref{section:restriction}
we work out the version for morphisms.

Our main goal is the following corollary
of \cref{theorem:left-adjoint-restriction},
which is of clear algebro-geometric interest.

\begin{alphacorollary}
  \label{corollary:lifting-morphisms-of-perfect-complexes}
  Consider the situation of \cref{example:geometric-setting}.
  Consider an exact triangle
  \begin{equation}
    F_0\overset{s_0}{\to}G_0\to H_0\to F_0[1]
  \end{equation}
  in~$\Perf X_0$, and let
  \begin{equation}
    F\overset{s}{\to}G
  \end{equation}
  be a morphism in~$\Perf X$ which restricts to~$s_0$, i.e.~$R_0\otimes_R^\LL s=s_0$.

  Let~$\overline{G}$ be a lift of~$G$ from~$\Perf X$ to~$\Perf\overline{X}$
  along~$R\otimes_{\overline{R}}^\LL-$.
  \begin{enumerate}
    \item
      \label{item:perfect-obstruction}
      There exists an obstruction class
      \begin{equation}
        \obstr(s)\in\Ext_{X_0}^1(F_0, \mathfrak{b}\otimes_{R_0}^\LL H_0)
      \end{equation}
      such that~$\obstr(s)=0$ if and only if~$s$ lifts to a
      morphism~$\overline{s}\colon\overline{F}\to\overline{G}$ in~$\Perf\overline{X}$.
    \item
      \label{item:perfect-torsor}
      Assume that~$\Ext^{-1}_{X_0}(F_0, \mathfrak{b}\otimes_{R_0}^\LL H_0)=0$.
      If~$\obstr(s)=0$,
      then the set of isomorphism classes of such lifts is a torsor under
      \begin{equation}
        \Ext^0_{X_0}(F_0, \mathfrak{b}\otimes_{R_0}^\LL H_0).
      \end{equation}
  \end{enumerate}
\end{alphacorollary}

We do not exhaustively develop our deformation-obstruction calculus
in all other variations discussed in \cite[\S6]{MR2175388},
and leave this to the interested reader.

\paragraph{Deforming semiorthogonal decompositions}
We now arrive at our initial motivation for developing the deformation theory in this paper:
to show that
(linear) semiorthogonal decompositions of categories of perfect complexes
deform uniquely in smooth proper families of schemes.
For context and background on semiorthogonal decompositions,
one is referred to the companion paper \cite{2002.03303},
and Kuznetsov's ICM surveys \cite{MR3728631,MR4680279}.

The following theorem corresponds exactly to \cite[Theorem~7.7]{2002.03303},
except that it is written in terms of a different functor,
introduced in \cref{definition:dec-functor}.
The equivalence to \cite[Theorem~7.7]{2002.03303}
follows from the comparison of functors in \cite[Theorem~5.7]{2002.03303}.

\begin{alphatheorem}
  \label{theorem:uniqueness-deformation-decomposition-triangles}
  Let~$f\colon\cX\to U$
  be a smooth and proper morphism of quasicompact and semiseparated schemes.
  Let $(R,\mathfrak m,\bfk)$ be a local noetherian ring that is $\mathfrak m$-adically complete.
  For every morphism $\Spec R \to U$, the natural map
  \begin{equation}
    \begin{tikzcd}
      \vartheta\colon \dec_{\Delta_f}(\Spec R \to U) \arrow{r} & \dec_{\Delta_f}(\Spec \bfk \to U)
    \end{tikzcd}
  \end{equation}
  is bijective.
\end{alphatheorem}

The proof of \cref{theorem:uniqueness-deformation-decomposition-triangles}
in \cref{section:unique-deformation-decomposition-triangles}
corresponds to the argument in the first preprint version of \cite{2002.03303}.
However, the referee of op.~cit.~suggested a different argument
which did not rely on developing a novel deformation theory from scratch.

\paragraph{Acknowledgements}
We want to thank Michel Van den Bergh for interesting discussions.

The first author was partially supported by FWO (Research Foundation---Flanders)
during the initial phase of this project,
and during the final phase of the project
by NWO (Dutch Research Council)
as part of the grant \href{https://doi.org/10.61686/RZKLF82806}{doi:10.61686/RZKLF82806}.
The second author was supported by
the ERC (European Research Council)
under the European Union's Horizon 2020 research and innovation programme (grant agreement no.~817762),
and FWO (Research Foundation--Flanders) (grant no.~G076725N).
The third author was partially supported by Grants-in-Aid for Scientific Research
(16H05994,
  16K13746,
  16H02141,
  16K13743,
  16K13755,
  16H06337,
  19KK0348,
  20H01797,
  20H01794,
  21H04994,
23H01074)
and the Inamori Foundation.
Finally, we want to thank the Max Planck Institute for Mathematics (Bonn)
and SISSA for the pleasant working conditions.

\section{Deformation theory for morphisms in the homotopy category of injectives}
\label{section:KInj-deformations}
In this section we prove \cref{theorem:right-adjoint-restriction}.
Let the notations be as in the theorem.
We need a convenient way of representing the morphism~$s\colon F\to G$.
We do this by using the coderived model structure
on the category of cochain complexes~$\operatorname{Ch}(\mathcal{C})$.

Let us briefly recall the relevant facts.
For a Grothendieck category $\mathcal{C}$,
the category~$\operatorname{Ch}(\mathcal{C})$ can be endowed with a (cofibrantly generated) abelian model structure,
for which $\KKK(\Inj\mathcal{C})$ is the homotopy category,
see \cite[\S7]{MR4781731}.
In this model category structure,
all objects are cofibrant,
the fibrant objects are the graded-injectives complexes
(in other words, the complexes in $\operatorname{Ch}(\Inj\mathcal{C})$),
and the weakly trivial objects (or coacyclic objects)
are the objects left orthogonal to the fibrant ones.
As is the case in an abelian model category,
these classes of objects determine the classes of cofibrations, fibrations and weak equivalences.
In particular, the cofibrations are the degreewise monomorphisms.

\begin{lemma}
  \label{lemma:presentation}
  Let~$\mathcal{C}$ be a Grothendieck category,
  and~$s\colon F\to G$ a morphism in~$\operatorname{Ch}(\Inj\mathcal{C})$.
  There exists a homotopy equivalence $h\colon G \to G'$
  with $G'\in \operatorname{Ch}(\Inj\mathcal{C})$
  such that the composition $h\circ s$ is homotopic to a degreewise monomorphism.
\end{lemma}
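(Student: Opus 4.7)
The plan is to use the standard mapping cylinder construction from homological algebra, noting that it preserves degreewise injectivity. Given the morphism $s\colon F \to G$ in $\operatorname{Ch}(\Inj\mathcal{C})$, I would define $G' \defeq \operatorname{Cyl}(s)$ whose degree $n$ component is $F^n \oplus F^{n+1} \oplus G^n$, equipped with the usual cylinder differential. Since a finite direct sum of injective objects of the Grothendieck category $\mathcal{C}$ is again injective, $G'$ lies in $\operatorname{Ch}(\Inj\mathcal{C})$.

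Next I would recall the three structural chain maps attached to the cylinder: the inclusion $j_F\colon F \to G'$, $f \mapsto (f,0,0)$, which is a degreewise monomorphism; the inclusion $h \defeq j_G\colon G \to G'$, $g \mapsto (0,0,g)$; and the projection $p\colon G' \to G$, $(f_1,f_2,g) \mapsto s(f_1) + g$. A direct verification shows that $p \circ h = \id_G$ and that $h \circ p$ is chain homotopic to $\id_{G'}$ via the standard homotopy sliding along the middle $F[1]$-summand. Consequently, $h$ is a homotopy equivalence in $\operatorname{Ch}(\Inj\mathcal{C})$.

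Finally, from the identities $p \circ j_F = s$ and $h \circ p \simeq \id_{G'}$ one obtains
\[
  h \circ s \;=\; h \circ p \circ j_F \;\simeq\; j_F,
\]
so $h \circ s$ is homotopic to the degreewise monomorphism $j_F$, as required.

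No step presents a real obstacle; the main point requiring care is simply verifying the degreewise injectivity of $G'=\operatorname{Cyl}(s)$, which relies only on the elementary fact that finite direct sums of injectives are injective. A more abstract alternative, worth mentioning for completeness, would be to invoke the coderived model structure of \cite[\S7]{MR4781731} directly: functorially factor $s$ as a cofibration followed by a trivial fibration $F \to G' \to G$, observe that $G'$ is fibrant because $G$ is and fibrations pull fibrancy back, and then take $h$ to be a homotopy inverse of the trivial fibration $G' \to G$ between fibrant objects.
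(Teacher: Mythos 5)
Your proof is correct, and it takes a genuinely different route from the paper's. The paper works inside the coderived model structure of Positselski--\v{S}t'ov\'{\i}\v{c}ek: it factors $s$ as a cofibration followed by a trivial fibration, takes a fibrant replacement of the intermediate object, and then passes from a roof in the homotopy category back to an honest homotopy equivalence using that all three objects are complexes of injectives. Your argument instead builds $G'=\operatorname{Cyl}(s)$ by hand and verifies the three chain-level identities $p\circ j_G=\id_G$, $j_G\circ p\simeq\id_{G'}$ and $p\circ j_F=s$ directly, from which $j_G\circ s\simeq j_F$ follows formally. This is more elementary: it uses nothing beyond the stability of $\Inj\mathcal{C}$ under finite direct sums and the standard cylinder homotopy, and it avoids invoking the abelian model structure at all. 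The paper's route is shorter to write once the model structure is in hand and is the one that generalizes smoothly when the factorization axiom is the only available tool; your route has the advantage of being explicit and self-contained, and in particular exhibits $G'$ concretely. Your closing observation is also accurate: in the paper's argument the intermediate object $G_1$ is already fibrant (being the source of a fibration to the fibrant $G$), so the extra fibrant replacement $z$ is redundant, and the factorization alternative you sketch is a mild streamlining of the paper's proof.
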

\vspace{-.5\baselineskip}
\begin{proof}
  It suffices to factor $s$ as a cofibration followed by a trivial fibration $w$,
  and take a fibrant replacement $z$ of the intermediate object as in the following diagram:
  \begin{equation}
    \begin{tikzcd}
      F \arrow[r, "s"] \arrow[rd] \arrow[d, swap, "1"] & G \\
      F \arrow[rd, swap, "{s'}"] & G_1 \arrow[u, swap, "w"] \arrow[d, "z"] \\
      & G'
    \end{tikzcd}
  \end{equation}
  In the homotopy category,
  the roof $zw^{-1}\colon G \to G'$
  is equivalent to
  a homotopy equivalence $h \colon G \to G'$,
  since the objects $G$ and $G'$ are in $\operatorname{Ch}(\Inj\mathcal{C})$.
  Then $s'$ is the desired cofibration (i.e., degreewise monomorphism) homotopic to $hs$.
\end{proof}

Returning to the setup of \cref{theorem:right-adjoint-restriction},
by \cref{lemma:presentation}, and up to isomorphism in $\KKK(\Inj\mathcal{C})$,
we can write our morphism~$s\colon F\to G$ as
\begin{equation}
  s\colon J^\bullet\hookrightarrow I^\bullet,
\end{equation}
with~$s$ a degreewise monomorphism, and~$J^\bullet$ and~$I^\bullet$ complexes of injectives.

We will repeatedly use the following lemma for lifting injective objects along the functor~$\Hom_{\overline{R}}(R,-)$.

\begin{lemma}
  \label{lemma:unique-lift-for-injectives}
  For every object~$I \in \Inj \cC$
  there exists an object~$\Ibar \in \Inj \overline{\cC}$,
  unique up to isomorphism,
  such that~$\Hom_{\overline{R}}(R,\Ibar)\simeq I$.
\end{lemma}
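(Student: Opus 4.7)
The plan is to prove existence and uniqueness of the lift~$\Ibar$ separately, both relying on the small-extension hypothesis~$\mathfrak{a}\mathfrak{b}=0$ (which forces in particular~$\mathfrak{b}^{2}=0$) together with flatness of the deformation.

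For existence, I would first choose an embedding of~$i(I)\in\overline{\cC}$ into some injective~$\overline{J}\in\Inj\overline{\cC}$ (where~$i\colon\cC\hookrightarrow\overline{\cC}$ is the exact inclusion induced by~$\overline{R}\twoheadrightarrow R$), using that~$\overline{\cC}$ is Grothendieck and hence has enough injectives. Since the right restriction~$\Hom_{\overline{R}}(R,-)$ is the right adjoint to the exact functor~$i$, it preserves injectives, so~$\Hom_{\overline{R}}(R,\overline{J})$ is injective in~$\cC$. Under the adjunction~$i\dashv\Hom_{\overline{R}}(R,-)$ the embedding~$i(I)\hookrightarrow\overline{J}$ corresponds to a morphism~$I\to\Hom_{\overline{R}}(R,\overline{J})$, which is again a monomorphism (this uses that~$i$ is fully faithful, so~$\Hom_{\overline{R}}(R,i(I))\simeq I$). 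Since~$I$ is already injective, this monomorphism splits and produces an idempotent endomorphism~$e$ of~$\Hom_{\overline{R}}(R,\overline{J})$ with image~$I$. The remaining step is to lift~$e$ to an idempotent~$\overline{e}\in\End_{\overline{\cC}}(\overline{J})$: one first lifts it to some morphism~$\overline{e}_{0}$ using injectivity of~$\overline{J}$, and then corrects~$\overline{e}_{0}$ to an idempotent using~$\mathfrak{b}^{2}=0$ (the error~$\overline{e}_{0}^{\,2}-\overline{e}_{0}$ is annihilated by~$\mathfrak{b}$, hence lies in a subobject to which the correction argument applies). Then~$\Ibar\defeq\mathrm{im}(\overline{e})$ is an injective direct summand of~$\overline{J}$, and its right restriction is~$\mathrm{im}(e)\simeq I$.

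For uniqueness, suppose~$\Ibar$ and~$\Ibar'$ are two such lifts. The identity~$\id_{I}$, viewed as a morphism~$\Hom_{\overline{R}}(R,\Ibar)\to\Hom_{\overline{R}}(R,\Ibar')$, corresponds by adjunction to a morphism~$i(I)\to\Ibar'$; using injectivity of~$\Ibar'$, this extends along the natural morphism~$i(I)\to\Ibar$ (which is a monomorphism in any flat deformation, by the short exact sequence~$0\to\mathfrak{b}\to\overline{R}\to R\to0$ tensored with~$\Ibar$) to a morphism~$\overline{\varphi}\colon\Ibar\to\Ibar'$. By construction,~$\Hom_{\overline{R}}(R,\overline{\varphi})=\id_{I}$, and a standard Nakayama-type argument -- the kernel and cokernel of~$\overline{\varphi}$ restrict to zero, and flatness over~$\overline{R}$ combined with the nilpotence of~$\mathfrak{b}$ forces them to vanish -- shows that~$\overline{\varphi}$ is an isomorphism.

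The main obstacle is the existence step, specifically lifting the idempotent~$e$ to a genuine idempotent on~$\overline{J}$. Lifting as a mere morphism is immediate from injectivity of~$\overline{J}$; the work is in arranging that the lift is idempotent, and it is precisely here that the small-extension condition~$\mathfrak{b}^{2}=0$ is indispensable, ensuring that the obstruction to idempotency lives in a suitable~$\mathfrak{b}$-torsion piece that can be eliminated by a further injectivity argument.
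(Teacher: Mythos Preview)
Your argument is correct in outline but takes a different, more hands-on route than the paper. The paper's proof is essentially a string of citations: injectives in a flat $\overline{R}$-linear category are coflat by \cite[Proposition~3.4]{MR2238922}; the deformation--obstruction theory of coflat objects \cite[Theorem~6.11]{MR2175388} then gives uniqueness of the lift (the relevant $\Ext^{1}$ and $\Ext^{2}$ vanish because $I$ is injective); and existence of an injective lift is read off from the proof of \cite[Proposition~5.5]{MR2175388}. Your approach instead constructs $\Ibar$ directly by lifting an idempotent modulo a square-zero ideal, and proves uniqueness by an explicit Nakayama argument. This is more elementary and self-contained---no appeal to the full obstruction calculus---though longer to write out. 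Both approaches ultimately rest on the same fact, namely that $\ker\bigl(\Hom_{\overline{R}}(R,-)\bigr)$ is a square-zero ideal of morphisms, which the paper invokes explicitly a few lines later anyway.

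Two small points to tighten. First, the monomorphism $i(I)\hookrightarrow\Ibar$ is justified not by \emph{tensoring} the sequence $0\to\mathfrak{b}\to\overline{R}\to R\to 0$ with $\Ibar$ but by applying $\Hom_{\overline{R}}(-,\Ibar)$ to it (equivalently, the counit is induced by the surjection $\overline{R}\twoheadrightarrow R$). Second, in your uniqueness step $\Hom_{\overline{R}}(R,-)$ is only left exact, so the cokernel of $\overline{\varphi}$ does not obviously restrict to zero; the quickest fix is to construct $\overline{\psi}\colon\Ibar'\to\Ibar$ symmetrically and note that $\overline{\psi}\circ\overline{\varphi}-\id_{\Ibar}$ lies in the square-zero ideal $\ker(\Hom_{\overline{R}}(R,-))$, whence $\overline{\psi}\circ\overline{\varphi}$ (and likewise $\overline{\varphi}\circ\overline{\psi}$) is invertible.
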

\vspace{-.5\baselineskip}
\begin{proof}
  As~$\overline{\cC}$
  is flat over $ \overline{R} $, \cite[Proposition 3.4]{MR2238922} implies that any injective object of~$\overline{\cC}$
  is coflat (see, say, \cite[Definition 2.5]{MR2238922} for the definition of coflat objects).
  Hence it admits a lift to a coflat object of~$\overline{ \cC }$,
  which is unique up to isomorphisms by the deformation theory \cite[Theorem 6.11]{MR2175388}
  and the injectivity of $ I $.

  Now it remains to show that there is at least one lift~$\Ibar \in \Inj ( \overline{ \cC } )$
  of~$I$,
  which has to be the coflat lift. This is shown in the proof of \cite[Proposition 5.5]{MR2175388}.
\end{proof}

\begin{lemma}
  \label{lemma:crude-lifting}
  Suppose that there exists an object~$\overline{G} \in \KKK ( \Inj ( \overline{ \cC }))$
  such that
  \begin{equation}
    \KKK \left( \Hom_{\overline{R}}(R, - ) \right) ( \overline{ G } )
    \simeq
    J^\bullet
    \in
    \KKK ( \Inj ( \cC ) ).
  \end{equation}
  Then there exists a complex~$\smash{\overline{J}}^\bullet$
  of objects in~$\Inj ( \overline{ \cC } )$
  such that~$\overline{ G } \simeq \smash{\overline{J}}^\bullet\in\KKK ( \Inj ( \overline{ \cC }))$
  and for each~$i \in \bZ$
  one has~$\Hom_{ \overline{ R } } ( R, \overline{J}^i )\simeq J^i\in\Inj ( \cC )$.
\end{lemma}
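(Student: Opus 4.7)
The plan is to represent~$\overline{G}$ by a complex in~$\Ch(\Inj(\overline{\cC}))$ whose terms are the prescribed injective lifts of the~$J^i$. Fix any representative~$\overline{G}^\bullet\in\Ch(\Inj(\overline{\cC}))$ of~$\overline{G}$ and let~$K^\bullet\defeq\Hom_{\overline{R}}(R,\overline{G}^\bullet)$. The hypothesis gives a chain-homotopy equivalence~$K^\bullet\simeq J^\bullet$ in~$\Ch(\Inj(\cC))$, and since chain-homotopy-equivalent complexes agree up to contractible direct summands (as shown via the mapping cylinder construction), there is a chain isomorphism~$K^\bullet\oplus C_1^\bullet\simeq J^\bullet\oplus C_2^\bullet$ for suitable contractible complexes~$C_1^\bullet,C_2^\bullet\in\Ch(\Inj(\cC))$.

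Let~$\overline{J}^i$ be the unique injective lift of~$J^i$ provided by~\cref{lemma:unique-lift-for-injectives}. Any contractible complex of injectives decomposes as a direct sum of elementary pieces of the form~$I_j\xrightarrow{\id}I_j$ concentrated in two adjacent degrees, and each such piece lifts tautologically via~\cref{lemma:unique-lift-for-injectives}; this produces contractible lifts~$\overline{C}_1^\bullet,\overline{C}_2^\bullet\in\Ch(\Inj(\overline{\cC}))$. The complex~$\overline{G}^\bullet\oplus\overline{C}_1^\bullet$ is equivalent to~$\overline{G}$ in~$\KKK(\Inj(\overline{\cC}))$, and its terms~$\overline{G}^i\oplus\overline{C}_1^i$ are the unique injective lifts of~$K^i\oplus C_1^i\simeq J^i\oplus C_2^i$, hence isomorphic in~$\Inj(\overline{\cC})$ to~$\overline{J}^i\oplus\overline{C}_2^i$. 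Choosing these termwise isomorphisms to lift the given direct-sum decomposition and transporting the differential along them produces a complex~$\overline{N}^\bullet\in\Ch(\Inj(\overline{\cC}))$ with~$\overline{N}^i=\overline{J}^i\oplus\overline{C}_2^i$, still equivalent to~$\overline{G}$ in~$\KKK$, and whose restriction is the direct sum of complexes~$J^\bullet\oplus C_2^\bullet$ with diagonal differential.

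The final step, which I expect to be the main obstacle, is to split~$\overline{N}^\bullet$ as~$\overline{J}^\bullet\oplus\overline{C}_2^\bullet$ in~$\Ch(\Inj(\overline{\cC}))$, for a complex~$\overline{J}^\bullet$ with~$\overline{J}^i$ as above. The off-diagonal entries of~$d_{\overline{N}}$ restrict to zero and therefore factor through~$\mathfrak{b}$\dash multiplication, so their pairwise compositions vanish by~$\mathfrak{b}^2=0$. The cocycle conditions coming from~$d_{\overline{N}}^2=0$ then exhibit these off-diagonal parts as cocycles between~$\overline{J}^\bullet$ and~$\overline{C}_2^\bullet$; contractibility of~$\overline{C}_2^\bullet$ supplies an explicit contracting homotopy~$h$ out of which one can write down null-homotopies (schematically~$\beta^i=-b^{i-1}h^i$, and analogously for the other direction) solving the resulting coboundary equations. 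These assemble into a graded automorphism of~$\overline{N}^\bullet$ of the shape~$\id+(\text{off-diagonal})$, which, thanks to~$\mathfrak{b}^2=0$, conjugates the differential to a diagonal one. The direct summand~$\overline{J}^\bullet$ of the resulting decomposition is the sought complex, and~$\overline{J}^\bullet\simeq\overline{G}$ in~$\KKK(\Inj(\overline{\cC}))$ follows from contractibility of~$\overline{C}_1^\bullet$ and~$\overline{C}_2^\bullet$.
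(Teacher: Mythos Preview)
Your argument is sound and can be completed as outlined, but it takes a very different route from the paper. The paper's proof is two lines: fix the termwise lifts $\overline{J}^i$ using \cref{lemma:unique-lift-for-injectives}, then invoke the \emph{crude lifting lemma} \cite[Proposition~4.3]{MR2175388}, which says precisely that once termwise lifts are chosen and \emph{some} lift of the complex exists in $\KKK$, the lift may be realised with the prescribed terms. What you have written is, in effect, a self-contained proof of (a special case of) that lemma via an explicit stabilise-by-contractibles-and-split construction. The paper's approach is shorter and modular; yours makes the mechanism transparent and avoids the external reference.

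Two small points worth tightening. First, the contracting homotopy $h$ you invoke in the final step must be for $(\overline{C}_2^\bullet, d_C)$, where $d_C$ is the $(2,2)$-entry of the transported differential $d_{\overline{N}}$; this need not agree with the differential on the contractible lift you built from elementary pieces. You should argue separately that $(\overline{C}_2^\bullet, d_C)$ is contractible: lift a contracting homotopy $h_0$ of $C_2^\bullet$ to some $\overline{h}$, observe that $d_C\overline{h}+\overline{h}d_C = \id + n$ with $n$ in the nilpotent kernel, and take $(\id+n)^{-1}\overline{h}$. Second, you do not actually need to diagonalise $d_{\overline{N}}$ fully: conjugating away the single block $b$ (using $\beta = -hb$, which restricts to zero because $b$ does) already exhibits $(\overline{J}^\bullet, d_J)$ as a subcomplex with contractible degreewise-split quotient, and that is enough to conclude $\overline{J}^\bullet \simeq \overline{N}^\bullet \simeq \overline{G}$ in $\KKK(\Inj\overline{\cC})$.
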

\vspace{-.5\baselineskip}
\begin{proof}
  For each~$i \in \bZ$,
  let~$\smash{\overline{J}}^i \in \Inj ( \overline{ \cC } )$
  be the lift of~$J^i$
  whose existence is shown in \cref{lemma:unique-lift-for-injectives}.
  Now the assertion follows from the crude lifting lemma \cite[Proposition 4.3]{MR2175388}.
\end{proof}

From now on, we will fix a lift~$\smash{\overline{I}}^{\bullet}$
of~$I^\bullet$
as in \cref{lemma:crude-lifting} once for all. Also, for each~$i \in \bZ$
consider the lift~$\smash{\overline{J}}^i$
of~$J^i$.
Using the injectivity of~$I^i$,
it follows from the deformation theory \cite[Theorem 6.12]{MR2175388}
that~$s^i \colon J^i \hookrightarrow I^i$
admits \emph{at least one lift}\footnote{
  In fact, the zeroth cohomology in \cite[Theorem~6.12(2)]{MR2175388}
  is nonzero as soon as~$J^i$ is nonzero.
  Hence the lift is not unique in general.
}~$\smash{\overline{ s }}^i \colon \smash{\overline{J}}^i \to \smash{\overline{ I }}^i$.
Note that~$\smash{\overline{ s }}^i$
is split injective. In fact, the injectivity of~$I^i$
implies that~$s^i$
admits a left inverse~$t^i \colon I^i \to J^i$.
By the same deformation theory as before,
it admits a lift~$\smash{\overline{ t }}^i \colon \smash{\overline{ I }}^i \to \smash{\overline{J}}^i$
and we know that
\begin{equation}
  n \defeq\smash{\overline{ t }}^i\circ \smash{\overline{ s }}^i - \identity_{ \smash{\overline{J}}^i }
  \in
  \End_{ \overline{ \cC } } ( \smash{\overline{J}}^i )
\end{equation}
is actually coming from
its subspace~$\Hom_{\overline{\cC}} \left( \Hom_{ \overline{R}} ( \mathfrak{b}, \smash{\overline{J}}^{i} ),\smash{\overline{J}}^i \right)$.

As explained in \cite[Proposition~5.4(i)]{MR2175388},
we have that~$\ker(\Hom_{\overline{R}}(R,-))^2=0$,
because of the standing assumption on the ring extensions,
which implies that~$\mathfrak{b}^2=0$,
so we are in the setting of \cite[\S3.2]{MR2175388}.

We have that~$n\in\ker(\Hom_{\overline{R}}(R,-))$,
because~$\Hom_{\overline{R}}(R,n)=t^i\circ s^i-\identity_{J^i}=0$.
Hence $n\circ n=0$ and~$( \identity_{\smash{\overline{J}}^i} - n )\circ \smash{ \overline{ t }}^i$
is a left inverse to~$\smash{\overline{ s }}^i$.
Now we fix for each~$i \in \bZ$
such a lift of~$s^i$
and name it~$\smash{\overline{ s }}_0^i$.
We also fix a left inverse~$\smash{\overline{ t }}_0^i$
to~$\smash{\overline{ s }}_0^i$.
This will give us, for each~$i \in \bZ$,
the direct sum decomposition
\begin{equation}
  \label{equation:decomposition-of-lift}
  \smash{\overline{I}}^i=\smash{\overline{J}}^i\oplus\smash{\overline{K}}^i,
\end{equation}
where~$\smash{\overline{J}}^i = \overline{ s }_0^i ( \smash{\overline{J}}^i )$
and~$\smash{\overline{K}}^i = \ker ( \smash{\overline{ t }_0^i} )$.
This brings us roughly in a situation like that of \cite[\S2.A.7]{MR2665168}, and we will combine the
ideas from op.~cit.~with the tools from \cite{MR2175388} to prove the result.

We consider the different (graded) lifts~$\overline{s}$ of $ s $
up to the action of the infinitesimal automorphisms of~$\smash{\overline{J}}^\bullet$.
These allow us to normalise the components
of~$\overline{s}$ such that they are of the form
\begin{equation}
  \overline{s}^i=\identity_{\smash{\overline{J}}^i}\oplus\beta_{\smash{\overline{s}}}^i
\end{equation}
for some
\begin{equation}
  \beta_{\overline{s}}^i
  \colon
  \Hom_{\overline{R}}( \mathfrak{b},\smash{\overline{J}}^i)\to\smash{\overline{K}}^i
\end{equation}
under the decomposition \eqref{equation:decomposition-of-lift}.
Via this normalisation, infinitesimal automorphisms between lifts
are forced to be the identity on the component~$\smash{\overline{J}}^\bullet$.

By combining the morphisms~$\overline{s}^i$ with the
embeddings~$\smash{\overline{K}}^i\hookrightarrow\smash{\overline{I}}^i$ we can identify~$\overline{s}$
with an automorphism~$b_{\overline{s}}$ of~$\smash{\overline{I}}^\bullet$ whose
components~$b_{\overline{s}}^i$ are of the form
\begin{equation}
  \label{equation:b}
  b_{\overline{s}}^i
  =
  \begin{pmatrix}
    \identity_{\smash{\overline{J}}^i} & 0 \\
    \beta_{\overline{s}}^i & \identity_{\smash{\overline{K}}^i}
  \end{pmatrix}
\end{equation}
The differential~$d_{\overline{I}^\bullet}$ can be decomposed using \eqref{equation:decomposition-of-lift}
into~$
\begin{psmallmatrix} d_{1,1} & d_{1,2} \\ d_{2,1} & d_{2,2}
\end{psmallmatrix}$,
and using~$b_{\overline{s}}$ it can be conjugated
into~$\tilde{d}_{\overline{s}}=b_{\overline{s}}^{-1}\circ d_{\smash{\overline{I}}^\bullet}\circ b_{\overline{s}}$,
whose entry in position~$(2,1)$ under the decomposition \eqref{equation:decomposition-of-lift}
we will denote by~$\varphi_{\overline{s}}$.
More explicitly, we have that
\begin{equation}
  \label{equation:varphi-description}
  \varphi_{\overline{s}}
  =
  -\beta_{\overline{s}}\circ d_{1,1} + d_{2,1} + d_{2,2}\circ\beta_{\overline{s}},
\end{equation}
as~$\beta_{\overline{s}}\circ d_{1,2}\circ\beta_{\overline{s}}=0$,
because~$\beta_{\overline{s}}\in\ker(\Hom_{\overline{R}}(R,-))$ and we have
that~$\ker(\Hom_{\overline{R}}(R,-))^2=0$ as before. This discussion proves the following lemma.

\begin{lemma}
  \label{lemma:subcomplex-iff-varphi-0}
  The map~$\overline{s}$ embeds~$\smash{\overline{J}}^\bullet $ into~$ \smash{\overline{ I }}^\bullet $ as a subcomplex
  if and only if~$\varphi_{\overline{s}}=0$.
\end{lemma}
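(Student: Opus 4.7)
The plan is to reinterpret the subcomplex condition purely in terms of the block data. The automorphism~$b_{\overline{s}}$ defined in~\eqref{equation:b} is a graded automorphism of~$\smash{\overline{I}}^\bullet$ which, when restricted to the inclusion of the first summand~$\smash{\overline{J}}^i \hookrightarrow \smash{\overline{J}}^i \oplus \smash{\overline{K}}^i = \smash{\overline{I}}^i$, recovers exactly~$\overline{s}^i = \identity_{\smash{\overline{J}}^i} \oplus \beta_{\overline{s}}^i$. In particular, the image~$\overline{s}(\smash{\overline{J}}^\bullet) \subseteq \smash{\overline{I}}^\bullet$ as a graded subobject coincides with~$b_{\overline{s}}(\smash{\overline{J}}^\bullet)$, where on the right~$\smash{\overline{J}}^\bullet$ refers to the first summand of the graded decomposition~\eqref{equation:decomposition-of-lift}.

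Next I would argue that, since~$\overline{s}$ is degreewise split injective, it realises~$\smash{\overline{J}}^\bullet$ as a subcomplex of~$\smash{\overline{I}}^\bullet$ if and only if the graded subobject~$\overline{s}(\smash{\overline{J}}^\bullet)$ is preserved by~$d_{\smash{\overline{I}}^\bullet}$. By the previous paragraph, applying~$b_{\overline{s}}^{-1}$ this is equivalent to the condition that the first summand~$\smash{\overline{J}}^\bullet$ of~\eqref{equation:decomposition-of-lift} be preserved by the conjugated differential~$\tilde{d}_{\overline{s}} = b_{\overline{s}}^{-1} \circ d_{\smash{\overline{I}}^\bullet} \circ b_{\overline{s}}$.

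Finally, under the block decomposition~$\smash{\overline{I}}^i = \smash{\overline{J}}^i \oplus \smash{\overline{K}}^i$, stability of the first summand under~$\tilde{d}_{\overline{s}}$ is equivalent to the vanishing of its~$(2,1)$\dash entry, which by definition is~$\varphi_{\overline{s}}$. This yields the stated equivalence. I do not anticipate any real obstacle: the substantive content---the explicit expression~\eqref{equation:varphi-description} for~$\varphi_{\overline{s}}$ in terms of~$\beta_{\overline{s}}$ and the blocks of~$d_{\smash{\overline{I}}^\bullet}$---has already been set up by the normalisation leading to~\eqref{equation:b}, and the lemma itself is just the unpacking of the conjugation in matrix form.
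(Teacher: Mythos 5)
Your proposal is correct and takes essentially the same route as the paper: the paper's proof is precisely the discussion preceding the lemma, which constructs the graded automorphism~$b_{\overline{s}}$, conjugates the differential to~$\tilde{d}_{\overline{s}}=b_{\overline{s}}^{-1}\circ d_{\smash{\overline{I}}^\bullet}\circ b_{\overline{s}}$, and reads off~$\varphi_{\overline{s}}$ as the~$(2,1)$ block. Your unpacking of why stability of the image of~$\overline{s}$ under~$d_{\smash{\overline{I}}^\bullet}$ transports, via~$b_{\overline{s}}$, to stability of the first summand under~$\tilde{d}_{\overline{s}}$ and hence to the vanishing of the~$(2,1)$ entry is exactly the intended argument.
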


Our next step is to analyse the differential~$\tilde{d}_{\overline{s}}$ as an endomorphism of degree~1
of~$\smash{\overline{I}}^\bullet$. For this we consider the short exact sequence of complexes of vector spaces
\begin{equation}
  0
  \to\Hom_-^\bullet(\Hom_{\overline{R}}( \mathfrak{b},\smash{\overline{I}}^\bullet),\smash{\overline{I}}^\bullet)
  \to\Hom^\bullet  (\Hom_{\overline{R}}( \mathfrak{b},\smash{\overline{I}}^\bullet),\smash{\overline{I}}^\bullet)
  \to\Hom_+^\bullet(\Hom_{\overline{R}}( \mathfrak{b},\smash{\overline{I}}^\bullet),\smash{\overline{I}}^\bullet)
  \to 0
  \label{equation:ses-Hom}
\end{equation}
where we define~$\Hom_-^\bullet(\Hom_{\overline{R}}(
\mathfrak{b},\smash{\overline{I}}^\bullet),\smash{\overline{I}}^\bullet)$ as the subcomplex of
morphisms~$\smash{\overline{I}}^i\to\smash{\overline{I}}^{i+j}$ of degree~$j$ which are zero
on~$\smash{\overline{J}}^i$ after reduction to~$R$.

By the construction we have the following lemma.
\begin{lemma}
  \label{lemma:1-cocycle}
  The differential~$\tilde{d}_{\overline{s}}$ defines a~1\dash cocycle~$
  \begin{psmallmatrix} 0 & 0 \\ \varphi & 0
  \end{psmallmatrix}$ of~$\Hom_+^\bullet(\Hom_{\overline{R}}(
  \mathfrak{b},\smash{\overline{I}}^\bullet),\smash{\overline{I}}^\bullet)$.
\end{lemma}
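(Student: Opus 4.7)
The plan is to derive both assertions from the tautological identity
\begin{equation}
  \tilde d_{\overline s}^2 = b_{\overline s}^{-1} \circ d_{\smash{\overline I}^\bullet}^2 \circ b_{\overline s} = 0,
\end{equation}
which holds because $d_{\smash{\overline I}^\bullet}^2 = 0$ and $b_{\overline s}$ is invertible.

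Writing $b_{\overline s} = \id + B_{\overline s}$ with $B_{\overline s} = \begin{psmallmatrix} 0 & 0 \\ \beta_{\overline s} & 0 \end{psmallmatrix}$, the block form combined with $\ker(\Hom_{\overline R}(R, -))^2 = 0$ gives $B_{\overline s}^2 = 0$, hence $b_{\overline s}^{-1} = \id - B_{\overline s}$. Using this I compute the four blocks of $\tilde d_{\overline s}$ explicitly: the $(2,1)$-entry is $\varphi_{\overline s}$ by \eqref{equation:varphi-description}. For the claimed matrix form, the $(1,2)$- and $(2,2)$-entries originate from $\smash{\overline K}^\bullet$ and so vanish on $\smash{\overline J}^\bullet$, trivially lying in $\Hom_-^\bullet$; the $(1,1)$-entry of $\tilde d_{\overline s} - d_{\smash{\overline I}^\bullet}$ is $d_{1,2} \circ \beta_{\overline s}$, which vanishes after reduction to $R$ because $\beta_{\overline s}$ does, so it too lies in $\Hom_-^\bullet$. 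Consequently, modulo $\Hom_-^\bullet$, the element of the Hom-complex associated to $\tilde d_{\overline s}$ is represented precisely by $\begin{psmallmatrix} 0 & 0 \\ \varphi_{\overline s} & 0 \end{psmallmatrix}$.

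The cocycle condition then follows by reading off the $(2,1)$-block of $\tilde d_{\overline s}^2 = 0$, which yields $\varphi_{\overline s} \circ \tilde d_{1,1} + \tilde d_{2,2} \circ \varphi_{\overline s} = 0$, i.e.\ the cocycle equation for $\varphi_{\overline s}$ in $\Hom_+^\bullet$. Equivalently, one can expand $0 = \tilde d_{\overline s}^2 = (d_{\smash{\overline I}^\bullet} + [d_{\smash{\overline I}^\bullet}, B_{\overline s}])^2$ and use $d_{\smash{\overline I}^\bullet}^2 = 0$ to obtain $\partial [d_{\smash{\overline I}^\bullet}, B_{\overline s}] + [d_{\smash{\overline I}^\bullet}, B_{\overline s}]^2 = 0$, where $\partial$ is the graded-commutator differential on the Hom-complex; the square term vanishes because every summand of $(dB_{\overline s} - B_{\overline s} d)^2$ contains two factors of $B_{\overline s}$ which annihilate one another by $B_{\overline s}^2 = 0$ and $\ker(\Hom_{\overline R}(R, -))^2 = 0$. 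The main delicate point will be the bookkeeping around the short exact sequence \eqref{equation:ses-Hom}, in particular the identification of $\Hom_+^\bullet$ with a Hom-complex built from $\smash{\overline J}^\bullet$ and $\smash{\overline K}^\bullet$; once this is set up, everything reduces to block-matrix manipulation combined with the square-zero relations already established.
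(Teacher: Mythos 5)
The paper offers no proof of this lemma: it states it with only the phrase ``By the construction we have the following lemma,'' so your attempt is supplying an argument the authors left implicit. Your core idea---derive the cocycle equation from $\smash{\tilde{d}_{\overline{s}}}^2 = 0$ by a block computation together with $\ker(\Hom_{\overline{R}}(R,-))^2 = 0$---is the right one, and the direct computation of the $(2,1)$-block of $\smash{\tilde{d}_{\overline{s}}}^2$ is essentially what closes the argument. However, there are three points that need tightening.

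First, your justification of the matrix form is not valid as stated. You assert that the $(1,1)$- and $(1,2)$-entries of $\smash{\tilde{d}_{\overline{s}}}$ (or of $\smash{\tilde{d}_{\overline{s}}} - d_{\smash{\overline{I}}^\bullet}$) lie in $\Hom_-^\bullet$. But the $(1,2)$-entry of $\smash{\tilde{d}_{\overline{s}}}$ is $d_{1,2}$, and the $(1,1)$-entry is $d_{1,1} + d_{1,2}\beta_{\overline{s}}$; neither is in $\ker(\Hom_{\overline{R}}(R,-))$, so neither even lies in the ambient complex $\Hom^\bullet(\Hom_{\overline{R}}(\mathfrak{b},\smash{\overline{I}}^\bullet),\smash{\overline{I}}^\bullet)$, let alone in $\Hom_-^\bullet$. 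The honest statement is simply that one extracts the $(2,1)$-entry $\varphi_{\overline{s}}$ of $\smash{\tilde{d}_{\overline{s}}}$, observes that it lies in $\ker(\Hom_{\overline{R}}(R,-))$ (each of $-\beta d_{1,1}$, $d_{2,1}$, $d_{2,2}\beta$ does), and regards $\begin{psmallmatrix}0&0\\\varphi_{\overline{s}}&0\end{psmallmatrix}$ as an element of $\Hom_+^1$ via the identification of \cref{proposition:hom-plus-is-hom}; no claim about the remaining entries of $\smash{\tilde{d}_{\overline{s}}}$ is needed.

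Second, your first derivation stops at $\varphi_{\overline{s}} \tilde{d}_{1,1} + \tilde{d}_{2,2}\varphi_{\overline{s}} = 0$, which involves the conjugated blocks $\tilde{d}_{1,1} = d_{1,1} + d_{1,2}\beta$ and $\tilde{d}_{2,2} = d_{2,2} - \beta d_{1,2}$. The differential of $\Hom_+^\bullet$ acts through $d_{1,1}$ and $d_{2,2}$, not their conjugates, so to obtain the actual cocycle equation $d_{2,2}\varphi_{\overline{s}} + \varphi_{\overline{s}} d_{1,1} = 0$ you need one more invocation of $\ker^2 = 0$ to discard $\varphi_{\overline{s}} d_{1,2}\beta$ and $\beta d_{1,2}\varphi_{\overline{s}}$. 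This is an easy step, but it is not automatic and you should say it.

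Third, your ``equivalent'' derivation via the commutator is not actually equivalent: it shows that $[d_{\smash{\overline{I}}^\bullet}, B_{\overline{s}}] = \smash{\tilde{d}_{\overline{s}}} - d_{\smash{\overline{I}}^\bullet}$ is a cocycle, and the $(2,1)$-block of this element is $\varphi_{\overline{s}} - d_{2,1}$, not $\varphi_{\overline{s}}$. To recover the statement of the lemma from this route you would additionally need to observe that $d_{2,1}$ itself is a cocycle in $\Hom_+^\bullet$ (it is, since $(d_{\smash{\overline{I}}^\bullet}^2)_{2,1} = d_{2,1}d_{1,1} + d_{2,2}d_{2,1} = 0$ and $d_{2,1} \in \ker(\Hom_{\overline{R}}(R,-))$ because $J^\bullet$ is a subcomplex of $I^\bullet$). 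The two representatives $\varphi_{\overline{s}}$ and $\varphi_{\overline{s}} - d_{2,1}$ are both cocycles, but they are genuinely different elements of $\Hom_+^1$, so presenting the second derivation as interchangeable with the first is misleading.
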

We are now in the position to define the obstruction class.
\begin{definition}
  The \emph{obstruction class} for~$s\colon F\to G$ is
  \begin{equation}
    \obstr(s)\defeq[\tilde{d}_{\overline{s}}]\in\HH^1(\Hom_+^\bullet(\Hom_{\overline{R}}(
    \mathfrak{b},\smash{\overline{I}}^\bullet),\smash{\overline{I}}^\bullet)).
  \end{equation}
\end{definition}
In the definition of the obstruction class there were choices involved, but by the proof of
\cref{proposition:torsor} these do not influence the cohomology class.

We now check that it really serves the role of an obstruction class.
\begin{proposition}
  \label{proposition:o-is-obstruction}
  The class~$\obstr(s)$ is the obstruction to the lifting of~$s$ as a morphism in the homotopy category of injectives.
  Namely,~$\obstr(s) = 0$
  if and only if there exists a morphism of complexes~$\overline{ s } \colon \smash{\overline{J}}^\bullet\to\smash{\overline{I}}^\bullet$
  such that~$\KKK ( \Hom_{ \overline{R} } ( R, - ) ) ( \overline{ s } )=s$.
\end{proposition}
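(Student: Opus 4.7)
The plan is to convert the vanishing of $\obstr(s)$ into the existence of a normalised graded lift $\overline{s}$ with $\varphi_{\overline{s}}=0$, and then apply \cref{lemma:subcomplex-iff-varphi-0}. The crucial observation is that, by \eqref{equation:varphi-description} and since $\beta_{\overline{s}_0}=0$ by construction of the decomposition \eqref{equation:decomposition-of-lift}, any other normalised graded lift $\overline{s}$ satisfies
\begin{equation*}
  \varphi_{\overline{s}} = d_{2,1} + d(\beta_{\overline{s}}),
\end{equation*}
where $d$ denotes the natural differential on the complex $\Hom^\bullet_+(\Hom_{\overline{R}}(\mathfrak{b},\smash{\overline{I}}^\bullet),\smash{\overline{I}}^\bullet)$; hence varying $\beta_{\overline{s}}$ changes $\varphi_{\overline{s}}$ by a coboundary.

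For the \emph{if} direction, assume $\obstr(s)=[d_{2,1}]=0$. Then there exists $\beta\in\Hom^0_+(\Hom_{\overline{R}}(\mathfrak{b},\smash{\overline{I}}^\bullet),\smash{\overline{I}}^\bullet)$ with $d(\beta) = -d_{2,1}$; setting $\beta_{\overline{s}}:=\beta$ produces a graded lift $\overline{s}$ of $s$ with $\varphi_{\overline{s}}=0$, which by \cref{lemma:subcomplex-iff-varphi-0} is a chain map. Each component $\overline{s}^i = \identity_{\smash{\overline{J}}^i}\oplus\beta^i_{\overline{s}}$ strictly lifts $s^i$, since the infinitesimal piece $\beta^i_{\overline{s}}$ vanishes after applying $\Hom_{\overline{R}}(R,-)$; hence $\overline{s}$ is the required lift in $\KKK(\Inj\overline{\cC})$.

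For the converse, suppose $\overline{\sigma}\colon\smash{\overline{J}}^\bullet\to\smash{\overline{I}}^\bullet$ is a chain map with $\KKK(\Hom_{\overline{R}}(R,-))(\overline{\sigma})=s$, meaning that $\Hom_{\overline{R}}(R,\overline{\sigma})$ is chain-homotopic to $s$ through some $h\colon J^\bullet\to I^{\bullet-1}$. The main obstacle is that $\overline{\sigma}$ need not strictly lift $s$ degreewise, which prevents the normalisation argument from being invoked directly; to remedy this one must lift the homotopy $h$. Each $h^i$ admits a lift $\overline{h}^i\colon\smash{\overline{J}}^i\to\smash{\overline{I}}^{i-1}$ by the morphism-lifting part of \cite[Theorem~6.12]{MR2175388} (with vanishing obstruction because $\smash{\overline{I}}^{i-1}$ is injective, cf.\ \cref{lemma:unique-lift-for-injectives}), exactly as in the construction of the $\overline{s}_0^i$. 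The chain-homotopic chain map $\overline{\sigma}' := \overline{\sigma} - (d\overline{h}+\overline{h}d)$ then strictly lifts $s$ on each degree. After applying an infinitesimal graded automorphism of $\smash{\overline{J}}^\bullet$ to bring $\overline{\sigma}'$ into the normalised form $\identity\oplus\beta_{\overline{\sigma}'}$, \cref{lemma:subcomplex-iff-varphi-0} yields $\varphi_{\overline{\sigma}'}=0$, and the opening formula gives $[d_{2,1}]=[-d(\beta_{\overline{\sigma}'})]=0$, i.e.\ $\obstr(s)=0$.
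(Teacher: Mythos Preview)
Your proof is correct and follows the same strategy as the paper's: both directions hinge on \cref{lemma:subcomplex-iff-varphi-0} and the formula \eqref{equation:varphi-description}, modifying~$\beta_{\overline{s}}$ by a coboundary in~$\Hom_+^\bullet$ to kill~$\varphi_{\overline{s}}$. Your explicit identification~$\obstr(s)=[d_{2,1}]$ (via the distinguished choice~$\overline{s}_0$ with~$\beta_{\overline{s}_0}=0$) is exactly the paper's~$[\tilde{d}_{\overline{s}}]$ specialised to that choice, and your ``if'' direction is the paper's converse direction with~$\psi_{2,1}=-\beta$.

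The one place you go further is the direction ``lift exists $\Rightarrow\obstr(s)=0$''. The paper writes ``Assume that a lift~$\overline{s}$ (as a morphism of cochain complexes) exists'' and immediately invokes \cref{lemma:subcomplex-iff-varphi-0}, tacitly assuming that~$\overline{s}$ restricts to~$s$ strictly degreewise so that the normalisation procedure preceding \eqref{equation:decomposition-of-lift} applies. But the statement only asks for equality in~$\KKK(\Inj\mathcal{C})$, i.e., up to homotopy. Your step of lifting the homotopy~$h$ to~$\overline{h}$ (using injectivity of the~$\smash{\overline{I}}^{i-1}$) and replacing~$\overline{\sigma}$ by the chain-homotopic~$\overline{\sigma}'=\overline{\sigma}-(d\overline{h}+\overline{h}d)$, which now strictly restricts to~$s$, closes this gap cleanly. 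The paper leaves this to the reader or absorbs it into the remark that the obstruction class is independent of choices; your treatment is more complete on this point.
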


\begin{proof}
  Assume that a lift~$\overline{s}$ (as a morphism of cochain complexes) exists.
  By \cref{lemma:subcomplex-iff-varphi-0} we have that~$\varphi_{\overline{s}}=0$, as
  then~$\smash{\overline{J}}^\bullet$ is a subcomplex of~$\smash{\overline{I}}^\bullet$ via $ \overline{ s } $.

  Conversely, if~$\obstr(s)=0$, then there exists a morphism
  \begin{equation}
    \psi=
    \begin{pmatrix}
      0 & 0 \\
      \psi_{2,1} & 0
    \end{pmatrix}
  \end{equation}
  such that
  \begin{equation}
    [\tilde{d}_{\overline{s}}]=\delta([\psi]),
  \end{equation}
  where~$\delta$ is the differential in the complex~$\Hom_+^\bullet(\Hom_{\overline{R}}(
  \mathfrak{b},\smash{\overline{I}}^\bullet),\smash{\overline{I}}^\bullet)$. This condition can be rephrased as
  \begin{equation}
    \label{equation:varphi-condition}
    \varphi=d_{2,2}\circ\psi_{2,1}-\psi_{2,1}\circ d_{1,1}.
  \end{equation}
  Then we claim that $\smash{\overline{J}}^\bullet$ becomes a subcomplex of~$\smash{\overline{I}}^\bullet$ via
  \begin{equation}
    \overline{s}\defeq\identity_{\smash{\overline{J}}^\bullet}\oplus(\beta-\psi_{2,1}).
  \end{equation}
  To prove this claim, it suffices by \cref{lemma:subcomplex-iff-varphi-0} to compute the
  component~$\varphi_{\overline{s}}$ of~$\tilde{d}$ associated to~$\beta-\psi_{2,1}$. Starting from the
  description in \eqref{equation:varphi-description} we can regroup and apply
  \eqref{equation:varphi-condition} to see that
  \begin{equation}
    \begin{aligned}
      &-(\beta-\psi_{2,1})\circ d_{1,1} + d_{2,1} + d_{2,2}\circ(\beta-\psi_{2,1}) \\
      &\qquad=(-\beta\circ d_{1,1} + d_{2,1} + d_{2,2}\circ\beta) - (d_{2,2}\circ\psi_{2,1} -
      \psi_{2,1}\circ d_{1,1}) \\
      &\qquad=0.
    \end{aligned}
  \end{equation}
  This completes the proof.
\end{proof}

This doesn't quite prove \cref{item:right-adjoint-obstruction} of \cref{theorem:right-adjoint-restriction} yet,
as the obstruction class lives in an a priori different cohomology space.
This will be remedied in \cref{proposition:hom-plus-is-hom}.
Before doing so, we first prove how the set of lifts is a torsor.
\begin{proposition}
  \label{proposition:torsor}
  Assume that~$\HH^{-1}(\RRHom_{\mathcal{C}_0}(\RRHom_{R_0}( \mathfrak{b},F_0),H_0))=0$. If~$\obstr(s)=0$,
  then the set of isomorphism classes of lifts is
  a torsor under $\HH^0(\Hom_+^\bullet(\Hom_{\overline{R}}(\mathfrak{b},\smash{\overline{I}}^\bullet),\smash{\overline{I}}^\bullet))$.
\end{proposition}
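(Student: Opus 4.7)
The plan is to construct a simply transitive action of the abelian group $\HH^0(\Hom_+^\bullet(\Hom_{\overline{R}}(\mathfrak{b},\overline{I}^\bullet),\overline{I}^\bullet))$ on the set of isomorphism classes of lifts of $s$, and then appeal to \cref{proposition:hom-plus-is-hom} to identify this group with the $\RRHom$ expression appearing in the statement. By \cref{proposition:o-is-obstruction} the vanishing $\obstr(s)=0$ already furnishes at least one lift $\overline{s}_0$; after applying an infinitesimal automorphism of $\overline{J}^\bullet$ its components take the normalised form $\overline{s}_0^i = \identity_{\overline{J}^i} \oplus \beta_{\overline{s}_0}^i$, with $\varphi_{\overline{s}_0} = 0$ by \cref{lemma:subcomplex-iff-varphi-0}.

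To describe the action, given a $0$-cocycle $\alpha = \begin{psmallmatrix} 0 & 0 \\ \alpha_{2,1} & 0 \end{psmallmatrix}$ in $\Hom_+^\bullet$, I form a translate $\overline{s}_\alpha$ with components $\identity_{\overline{J}^i} \oplus (\beta_{\overline{s}_0}^i + \alpha_{2,1}^i)$. The formula \eqref{equation:varphi-description} for $\varphi$ is affine in $\beta$ (the term $\beta \circ d_{1,2} \circ \beta$ vanishes since $\mathfrak{b}^2 = 0$), so $\varphi_{\overline{s}_\alpha} = \varphi_{\overline{s}_0} + \delta(\alpha) = 0$, and \cref{lemma:subcomplex-iff-varphi-0} confirms that $\overline{s}_\alpha$ is again a cochain-level lift. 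Transitivity is then immediate: any other lift can be normalised the same way, and the difference $\beta_{\overline{s}} - \beta_{\overline{s}_0}$ is a $0$-cocycle because both $\varphi$'s vanish.

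The main obstacle is passing from cochain-level lifts to isomorphism classes in $\KKK(\Inj\overline{\cC})$. I need to show that $\overline{s}_{\alpha_1}$ and $\overline{s}_{\alpha_2}$ represent the same class precisely when $\alpha_1 - \alpha_2$ is a coboundary in $\Hom_+^\bullet$. The easy direction takes a cochain primitive $h$ with $\delta(h) = \alpha_1 - \alpha_2$ supported in the $\Hom_+$ subcomplex, and upgrades $\identity + h$ to an infinitesimal automorphism of $\overline{I}^\bullet$ fixing $\overline{G}$ and inducing the identity modulo $\mathfrak{b}$; this automorphism intertwines the two normalised lifts. The converse produces, from an isomorphism of lifts, a candidate $h$ whose only ambiguity lies in a cohomology space one degree lower; the assumption $\HH^{-1}(\RRHom_{\cC_0}(\RRHom_{R_0}(\mathfrak{b},F_0),H_0)) = 0$ precisely kills this ambiguity, so the class $[\alpha_1 - \alpha_2] \in \HH^0$ is well-defined and the action is free. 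The final step, identifying $\HH^0$ of the auxiliary $\Hom_+^\bullet$ complex with the $\RRHom$ group in the statement, is handled by \cref{proposition:hom-plus-is-hom} in exactly the same way as for the obstruction class.
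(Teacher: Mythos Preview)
Your strategy matches the paper's: translate the normalised $\beta$ by $0$-cocycles of $\Hom_+^\bullet$ to define the action, note that the difference of two $\beta$'s arising from lifts is again a $0$-cocycle (transitivity), and argue that two $\beta$'s yield isomorphic lifts exactly when their difference is a $0$-coboundary. The paper's own proof is equally terse on this last step and, like yours, does not spell out precisely where the $\HH^{-1}$-vanishing hypothesis enters.

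Two imprecisions in your write-up deserve attention. In the ``easy direction'' you form $\identity + h$ with $h \in \Hom_+^{-1}$, which is a degree mismatch; and you speak of an infinitesimal automorphism of $\overline{I}^\bullet$ ``fixing $\overline{G}$'', but $\overline{I}^\bullet$ \emph{is} the fixed target $\overline{G}$ --- an isomorphism of lifts is an isomorphism of the \emph{sources} $\overline{J}^\bullet$ (each equipped with its $\beta$-dependent differential $d_{1,1} + d_{1,2}\beta$). What the paper records instead is the concrete characterisation: $\beta - \beta'$ is a $\Hom_+^\bullet$-coboundary iff there exists $\eta \in \Hom^{-1}(\overline{J}^\bullet,\overline{K}^\bullet)$ with $\beta - \beta' = d_{2,2}\circ\eta - \eta\circ d_{1,1}$, and such an $\eta$ is the datum from which one builds the required isomorphism of lifts. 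Your account of the converse (``ambiguity in one degree lower'') is also too vague as written: the class $[\alpha_1 - \alpha_2]$ is well-defined regardless, and the task is to show it \emph{vanishes} whenever the lifts are isomorphic. Finally, \cref{proposition:hom-plus-is-hom} is not needed for the proposition as stated --- the torsor group there is already $\HH^0(\Hom_+^\bullet)$ --- and is only invoked afterwards, in the proof of \cref{theorem:right-adjoint-restriction}, to pass to the $\RRHom$ formulation.
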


\begin{proof}
  Let~$\beta\defeq\beta_{\overline{s}}$ and~$\beta'\defeq\beta_{\overline{s}'}$ be associated to
  different lifts~$\overline{s}$ and~$\overline{s}'$ of~$s$. By \cref{lemma:subcomplex-iff-varphi-0}
  this means that the associated~$\varphi_{\overline{s}}$ and~$\varphi_{\overline{s}'}$ are zero, i.e.,
  \begin{equation}
    -\beta\circ d_{1,1}+d_{2,1}+d_{2,2}\circ\beta=0
  \end{equation}
  and
  \begin{equation}
    -\beta'\circ d_{1,1}+d_{2,1}+d_{2,2}\circ\beta'=0
  \end{equation}
  But then~$\beta-\beta'$ is a~$0$-cocycle in~$\Hom_+^\bullet(\Hom_{\overline{R}}(
  \mathfrak{b},\smash{\overline{I}}^\bullet),\smash{\overline{I}}^\bullet)$, which is analogous to what
  happened in \eqref{equation:varphi-condition}.

  Conversely, if~$\beta\defeq\beta_{\overline{s}}$ is associated to a lift~$\overline{s}$ of~$s$,
  and~$\xi$ is a~$0$\dash cocycle in~$\Hom_+(\Hom_{\overline{R}}(
  \mathfrak{b},\smash{\overline{I}}^\bullet),\smash{\overline{I}}^\bullet)$, then~$\beta+\xi$ defines
  another lift of~$s$, because
  \begin{equation}
    \begin{aligned}
      &-(\beta+\xi)\circ d_{1,1}+d_{2,1}+d_{2,2}\circ(\beta+\xi) \\
      &\qquad=(-\beta\circ d_{1,1}+d_{2,1}+d_{2,2}\circ\beta)+\delta(\xi) \\
      &\qquad=0
    \end{aligned}
  \end{equation}

  Finally, note that~$\beta$ and~$\beta'$ define the same morphism from~$\smash{\overline{J}}^\bullet$
  to~$\smash{\overline{I}}^\bullet$ if and only if~$b_\beta=b_{\beta'}$ (recall \eqref{equation:b}) as
  endomorphisms of~$\smash{\overline{I}}^\bullet$. But~$b_{\beta}-b_{\beta'}$ is a~0\dash coboundary
  of~$\Hom^\bullet(\smash{\overline{I}}^\bullet,\smash{\overline{I}}^\bullet)$ if and
  only~$b_\beta-b_{\beta'}$ is a~0\dash coboundary of~$\Hom_+^\bullet(\Hom_{\overline{R}}(
  \mathfrak{b},\smash{\overline{I}}^\bullet),\smash{\overline{I}}^\bullet)$. But this is the case if and
  only if there exists~$\eta\in\Hom^{-1}(\smash{\overline{J}}^\bullet,\smash{\overline{K}}^\bullet)$ such
  that $\beta-\beta'=d_{2,2}\circ\eta-\eta\circ d_{1,1}$.
\end{proof}

Finally we prove the following identification, finishing the proof of \cref{theorem:right-adjoint-restriction}.

\begin{proposition}
  \label{proposition:hom-plus-is-hom}
  There exist isomorphisms of complexes
  \begin{equation}
    \Hom_+^\bullet(\Hom_{\overline{R}}( \mathfrak{b},\smash{\overline{I}}^\bullet),\smash{\overline{I}}^\bullet)
    \simeq\Hom_+^\bullet(\Hom_{R_0}( \mathfrak{b},I_0^\bullet),I_0^\bullet)
    \simeq\Hom^\bullet(\Hom_{R_0}( \mathfrak{b},K_0^\bullet),J_0^\bullet),
  \end{equation}
  where as before~$K_0^\bullet\defeq I_0^\bullet/J_0^\bullet$.
\end{proposition}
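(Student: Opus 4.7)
The proposition chains together two isomorphisms, which I would prove in sequence, each one reducing to an elementary adjunction together with some bookkeeping of the subcomplex defining $\Hom_-^\bullet$.

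\textbf{First isomorphism.} The content here is that the functor $\Hom_{\overline{R}}(R_0,-)$ on the outer Hom is an equivalence on the relevant objects. The key inputs are: (i)~because $\mathfrak{a}\mathfrak{b}=0$ and $\mathfrak{b}^2=0$, the ideal $\mathfrak{b}$ is annihilated by $\mathfrak{a}$ and so is naturally an $R_0$-module; (ii)~two applications of \cref{lemma:unique-lift-for-injectives} (first from $\overline{R}$ to $R$ and then from $R$ to $R_0$) give $\Hom_{\overline{R}}(R_0,\overline{I}^i)\simeq I_0^i$. The tensor-Hom adjunction attached to $\overline{R}\twoheadrightarrow R_0$ then yields
\begin{equation}
  \Hom_{\overline{R}}(\mathfrak{b},\overline{I}^i)
  \simeq
  \Hom_{R_0}\bigl(\mathfrak{b},\Hom_{\overline{R}}(R_0,\overline{I}^i)\bigr)
  \simeq
  \Hom_{R_0}(\mathfrak{b},I_0^i),
\end{equation}
showing in particular that the source of the outer Hom is an $R_0$-linear object. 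A further adjunction transports the outer Hom from $\overline{\cC}$ down to $\cC_0$. These isomorphisms are natural in the degree indices and assemble into an isomorphism of Hom complexes; because the subcomplex $\Hom_-^\bullet$ is defined by the same "vanishing on the $\overline{J}$-component after reduction" condition in both ambients, the identification descends to the $\Hom_+^\bullet$-quotients.

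\textbf{Second isomorphism.} Now I would exploit the direct sum decomposition $I_0^i=J_0^i\oplus K_0^i$ obtained by reducing \eqref{equation:decomposition-of-lift} to $\cC_0$. Since $\Hom_{R_0}(\mathfrak{b},-)$ is additive, the source of the outer Hom splits as $\Hom_{R_0}(\mathfrak{b},J_0^\bullet)\oplus\Hom_{R_0}(\mathfrak{b},K_0^\bullet)$, and the target $I_0^\bullet$ splits as $J_0^\bullet\oplus K_0^\bullet$. The total Hom complex therefore decomposes as a $2\times 2$ block matrix of Hom complexes indexed by (source-block, target-block) pairs. The condition cutting out $\Hom_-^\bullet$ — translated through the first isomorphism — corresponds to the vanishing of three of these four blocks, namely precisely the ones that do not contribute to the $(2,1)$-entry of the conjugated differential as witnessed by the formula \eqref{equation:varphi-description} for $\varphi_{\overline{s}}$. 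The surviving block yields the stated expression $\Hom^\bullet(\Hom_{R_0}(\mathfrak{b},K_0^\bullet),J_0^\bullet)$.

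\textbf{Main obstacle.} The adjunctions and additivity splittings are formal, and I would expect the main difficulty to be the careful bookkeeping: matching the abstractly defined condition "zero on $\overline{J}^i$ after reduction to $R$" upstairs over $\overline{R}$ against its concrete translation under the two adjunctions and the block decomposition in $\cC_0$, and verifying that the differentials on the $2\times 2$ blocks restrict appropriately so that the surviving block is itself a Hom complex with the correct differential. No real computation is needed once the identifications are pinned down; the content is diagram-chasing plus consistency of the decomposition \eqref{equation:decomposition-of-lift} with the reduction functors.
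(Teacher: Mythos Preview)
Your approach is the paper's: the first isomorphism is the adjunction unlocked by $\mathfrak{a}\mathfrak{b}=0$ (the paper says only this), and the second is the projection onto one block of the $2\times 2$ decomposition --- the paper packages this as an explicit map $\mu^j$ sending $t$ to ``restrict to the $J_0$-summand of the source, then project to the $K_0$-summand of the target''. One slip in your bookkeeping: it is $\Hom_-$ that \emph{consists of} the three ``upper-triangular'' blocks (those with vanishing $(2,1)$-component), so that the quotient $\Hom_+$ is the single $(2,1)$-block; your sentence ``the condition cutting out $\Hom_-$ corresponds to the vanishing of three of these four blocks'' has this inverted. Relatedly, the $(2,1)$-block you correctly isolate via \eqref{equation:varphi-description} has source $J_0$ and target $K_0$, i.e.\ it is $\Hom^\bullet(\Hom_{R_0}(\mathfrak{b},J_0^\bullet),K_0^\bullet)$ --- which is exactly what the paper's $u^j$ produces and what matches the cohomology group in \cref{theorem:right-adjoint-restriction}; the roles of $J_0$ and $K_0$ appear swapped in the displayed statement of the proposition itself.
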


\begin{proof}
  The first isomorphism follows from the fact that~$\overline{R}\to R$ is small relative to~$R\to R_0$,
  i.e.~that~$ \mathfrak{a} \mathfrak{b}=0$. We will denote the image of an element~$(s^i)_i$ by~$(t^i)_i$.

  Now define
  \begin{equation}
    \mu^j\colon\Hom_+^j(\Hom_{R_0}( \mathfrak{b},I_0^\bullet),I_0^\bullet)=\frac{\Hom^j(\Hom_{R_0}(
    \mathfrak{b},I_0^\bullet),I_0^\bullet)}{\Hom_-^j(\Hom_{R_0}(
    \mathfrak{b},I_0^\bullet),I_0^\bullet)}\to\Hom^j(\Hom_{R_0}( \mathfrak{b},K_0^\bullet),J_0^\bullet)
  \end{equation}
  by sending~$t^j$ to the morphism~$u^j$ defined by the composition
  \begin{equation}
    \begin{tikzcd}
      \Hom_{R_0}( \mathfrak{b},J_0^i) \arrow[r, "u^j"] \arrow[d, hook] & K_0^{i+j} \\
      \Hom_{R_0}( \mathfrak{b},I_0^i) \arrow[r, "t^j"] & I_0^{i+j} \arrow[u, two heads]
    \end{tikzcd}
  \end{equation}
  giving the second isomorphism.
\end{proof}

\begin{proof}[Proof of \cref{theorem:right-adjoint-restriction}]
  To prove \cref{item:right-adjoint-obstruction,item:right-adjoint-torsor}
  it suffices to apply the identification from
  \cref{proposition:hom-plus-is-hom}
  to \cref{proposition:o-is-obstruction,proposition:torsor}.
\end{proof}

For the geometric applications we have in mind
we need the following result, dual to \cref{theorem:right-adjoint-restriction}.
To deduce it from \cref{theorem:right-adjoint-restriction},
it suffices to consider the opposite categories of~$\mathcal{C}_0$, $\mathcal{C}$
and~$\overline{\mathcal{C}}$ in \eqref{equation:deformations},
using \cite[Proposition~8.7(3)]{MR2238922}.
This exchanges left and right adjoints, and turns co-Grothendieck categories into Grothendieck categories.
We thus obtain the following.
\begin{theorem}[Dual version of \cref{theorem:right-adjoint-restriction}]
  \label{theorem:left-adjoint-restriction}
  Let~$\mathcal{C}_0$ be a co-Grothendieck abelian category.
  Consider an exact triangle
  \begin{equation}
    F_0\overset{s_0}{\to}G_0\to H_0\to F_0[1]
  \end{equation}
  in~$\KKK(\Proj\mathcal{C}_0)$, and let
  \begin{equation}
    F\overset{s}{\to}G
  \end{equation}
  be a morphism in~$\KKK(\Proj\mathcal{C})$ which restricts to~$s_0$, i.e.,~$\KKK(R_0\otimes_R s)=s_0$.

  Let~$\overline{G}$ be a lift of~$G$ from~$\KKK(\Proj\mathcal{C})$ to~$\KKK(\Proj\overline{\mathcal{C}})$
  along~$\KKK(R\otimes_{\overline{R}}-)$.
  \begin{enumerate}
    \item
      There exists an obstruction class
      \begin{equation}
        \obstr(s)\in\HH^1(\RRHom_{\mathcal{C}_0}(F_0, \mathfrak{b}\otimes_{R_0}^\LL H_0))
      \end{equation}
      such that~$\obstr(s)=0$ if and only if~$s$ lifts to a
      morphism~$\overline{s}\colon\overline{F}\to\overline{G}$ in~$\KKK(\Proj\overline{\mathcal{C}})$.
    \item
      Assume that~$\HH^{-1}(\RRHom_{\mathcal{C}_0}(F_0, \mathfrak{b}\otimes_{R_0}^\LL H_0))=0$.
      If~$\obstr(s)=0$,
      then the set of isomorphism classes of such lifts is a torsor under
      \begin{equation}
        \HH^0(\RRHom_{\mathcal{C}_0}(F_0, \mathfrak{b}\otimes_{R_0}^\LL H_0)).
      \end{equation}
  \end{enumerate}
\end{theorem}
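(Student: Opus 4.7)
The plan is to reduce this directly to \cref{theorem:right-adjoint-restriction} by dualizing, as indicated in the paragraph preceding the statement. First, I would verify that the passage $\cC \mapsto \cC^{\op}$ carries the deformation setup \eqref{equation:deformations} to a new instance of the same setup. The key input is \cite[Proposition~8.7(3)]{MR2238922}, which guarantees that flatness of an $R$-linear abelian category is preserved under opposites, so that $\overline{\cC}^{\op} \twoheadrightarrow \cC^{\op} \twoheadrightarrow \cC_0^{\op}$ is again a chain of flat abelian deformations over the same rings. Under this operation Grothendieck and co-Grothendieck swap, $\Inj$ and $\Proj$ swap, and the left restriction functor $R\otimes_{\overline{R}}-$ is identified with the right restriction functor $\Hom_{\overline{R}}(R,-)$ of the opposite deformation (and symmetrically).

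Under these identifications, the data in the hypothesis of the present theorem translate to the data of \cref{theorem:right-adjoint-restriction} applied to the opposite deformation: the triangle $F_0\to G_0\to H_0\to F_0[1]$ in $\KKK(\Proj\cC_0)$ becomes an exact triangle in $\KKK(\Inj\cC_0^{\op})$ (with the appropriate reversal and relabeling of the shift), the morphism $s\colon F\to G$ in $\KKK(\Proj\cC)$ becomes a morphism in $\KKK(\Inj\cC^{\op})$, and the fixed lift $\overline{G}\in\KKK(\Proj\overline{\cC})$ along $R\otimes_{\overline{R}}-$ becomes a fixed lift along $\Hom_{\overline{R}}(R,-)$ in the opposite setup. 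Lifts of $s$ in $\KKK(\Proj\overline{\cC})$ are in bijection with lifts of the corresponding opposite morphism in $\KKK(\Inj\overline{\cC}^{\op})$, so the existence statement and torsor statement transport verbatim.

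The only remaining issue is to identify the obstruction and torsor groups. Applying \cref{theorem:right-adjoint-restriction} in $\cC_0^{\op}$ places the obstruction in $\HH^1(\RRHom_{\cC_0^{\op}}(\RRHom_{R_0}(\mathfrak{b},F_0),H_0))$, and similarly for the torsor. Using the tautological identification $\RRHom_{\cC_0^{\op}}(X,Y)=\RRHom_{\cC_0}(Y,X)$, together with the fact that the $R_0$-linear functor $\Hom_{R_0}(\mathfrak{b},-)$ in $\cC_0^{\op}$ corresponds to $\mathfrak{b}\otimes_{R_0}-$ in $\cC_0$ (by adjointness of tensor and internal Hom by the $R_0$-module $\mathfrak{b}$), these groups rewrite as $\HH^1(\RRHom_{\cC_0}(F_0,\mathfrak{b}\otimes_{R_0}^{\LL}H_0))$ and $\HH^0(\RRHom_{\cC_0}(F_0,\mathfrak{b}\otimes_{R_0}^{\LL}H_0))$ respectively, matching the statement. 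The main point to check carefully—and what I expect to be the most delicate—is precisely this bookkeeping: confirming that passing to the opposite derived category sends $\RHom$-with-$\Hom$-argument to $\RHom$-with-$\otimes^{\LL}$-argument, and that the direction of the triangle is rotated so that $F_0$ and $H_0$ appear in the correct slots. Once that dictionary is pinned down, the result is an immediate translation of \cref{theorem:right-adjoint-restriction}.
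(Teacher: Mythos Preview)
Your approach---passing to opposite categories and invoking \cite[Proposition~8.7(3)]{MR2238922}---is precisely the paper's argument, which is dispatched in the single sentence preceding the theorem. However, your explicit bookkeeping contains a gap that is worth spelling out, since it is exactly the ``delicate'' point you flag.

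When you pass to $\cC_0^{\op}$, the morphism $s\colon F\to G$ becomes $s^{\op}\colon G\to F$, and the fixed lift $\overline{G}$ is now a lift of the \emph{domain} of $s^{\op}$. But \cref{theorem:right-adjoint-restriction} is stated for a fixed lift of the \emph{codomain}, so it does not apply to $s^{\op}$ as you suggest. The fix is to rotate: lifting $s\colon F\to G$ with fixed $\overline{G}$ is equivalent to lifting the next map $t\colon G\to H$ in the triangle with fixed $\overline{G}$ (this symmetry is already visible in the proof of \cref{theorem:right-adjoint-restriction}, which works with the degreewise split sequence $J^\bullet\hookrightarrow I^\bullet\twoheadrightarrow K^\bullet$ and fixes the middle term). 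In the opposite category, $t^{\op}\colon H\to G$ now has $\overline{G}$ as its fixed codomain, and its cone is $F_0$. Applying \cref{theorem:right-adjoint-restriction} to $t^{\op}$ places the obstruction in $\HH^1(\RRHom_{\cC_0^{\op}}(\RRHom_{R_0}(\mathfrak{b},H_0),F_0))$, which under your two identifications does rewrite as $\HH^1(\RRHom_{\cC_0}(F_0,\mathfrak{b}\otimes_{R_0}^{\LL}H_0))$. By contrast, your displayed intermediate group $\RRHom_{\cC_0^{\op}}(\RRHom_{R_0}(\mathfrak{b},F_0),H_0)$ has $F_0$ and $H_0$ in the wrong slots: under the same dictionary it becomes $\RRHom_{\cC_0}(H_0,\mathfrak{b}\otimes_{R_0}^{\LL}F_0)$, not the target group.
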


\section{Restriction to the derived category}
\label{section:restriction}
In order to apply \cref{theorem:left-adjoint-restriction}
to the setting of interest to us
we need a comparison result as in \cite[\S6.3]{MR2175388}.
For geometric applications we are not so much interested in deforming morphisms in~$\KKK(\Proj\Pro\mathcal{C})$,
but rather in derived categories which avoid the Pro-construction.

We need the analogue of \cite[Proposition~6.2]{MR2175388}. Let us first recall the following definition
from \cite[Definition 6.1]{MR2175388}.
\begin{definition}
  \label{definition:L}
  A diagram of functors
  \begin{equation}
    \label{equation:L-condition}
    \begin{tikzcd}
      \mathcal{C} \arrow[r, "F"] \arrow[d, swap, "H"] & \mathcal{C}' \arrow[d, "H'"] \\
      \mathcal{D} \arrow[r, "G"] & \mathcal{D}'
    \end{tikzcd}
  \end{equation}
  \emph{satisfies (L)} if the following conditions hold:
  \begin{enumerate}
    \item the diagram is commutative up to natural isomorphism;
    \item $F$ and $G$ are fully faithful;
    \item if $H'(C')\simeq G(D)$ for some~$C'\in\mathcal{C}'$ and~$D\in\mathcal{D}$, then there exists an
      object~$C\in\mathcal{C}$ such that~$C'\simeq F(C)$.
  \end{enumerate}
\end{definition}
Here~$H$ and~$H'$ are to be interpreted as the restriction functors for two deformations.

Next, let~$f\colon D_1\to D_2$ be a morphism in~$\mathcal{D}_2$,
and let~$d \colon D_{ 2} \simto H( \overline{D}_2 )$
be a lift of~$D_2$.
We introduce the following variation on \cite[Definition~3.1(2)]{MR2175388}.
\begin{definition}
  The \emph{set of lifts of~$f$},
  denoted by~$\lift_H(f\mid d)$,
  is the set of isomorphism classes of
  the groupoid where an object is a pair~$(d_1,\overline{f})$ of
  \begin{enumerate}
    \item a lift~$d_1 \colon D_1 \simto H ( \overline{ D }_1 )$
      of~$D_1$,
    \item a morphism~$\overline{f} \colon \overline{ D }_1 \to \overline{ D }_2$,
  \end{enumerate}
  such that~$H ( \overline{f} ) d_1=d f$,
  and a morphism from~$(d_1,\overline{f})$
  to~$(d_1' \colon D_1 \simto H ( \smash{\overline{D}}_1 ' ),\smash{\overline{f}}' \colon \smash{\overline{D}}_1' \to \overline{D}_2)$
  is an isomorphism~$\varphi \colon \overline{ D }_1 \simto\smash{\overline{D}}_1 '$
  such that~$\smash{\overline{f}}' \varphi \overline{f} = \overline{f}$
  and~$H ( \varphi ) d_1 = d_1 '$.
\end{definition}

Suppose a diagram of the form \eqref{equation:L-condition} satisfies (L) from \cref{definition:L}.
Let~$f\colon D_1\to D_2$ be a morphism in~$\mathcal{D}$,
let~$ c_2 \colon D_2 \simto H ( C_2 )\in\lift_H(D_2)$
be an element of the set of lifts of~$D_2$
in the sense of \cite[Definition~3.1(1)]{MR2175388},
and let~$C_2'\defeq F(C_2)$.
Let~$g \defeq G ( f )$, and
\begin{equation}
  c '_2 \defeq G ( c_2 )
  \colon
  G ( D_2 ) \simto F ( C '_2 ).
\end{equation}
Here we used the isomorphism~$G H \simeq H ' F$.
Consider the canonical map
\begin{equation}
  \label{equation:map-between-sets-of-lifts}
  \lift_H(f\mid c_2) \to \lift_{H'}(g\mid c_2')
\end{equation}
which sends a lift~$(c_1 \colon D_1 \simto H ( C_1 ),\overline{f} \colon C_1 \to C_2)$
of~$f$
to the following lift of $g$.
\begin{equation}
  (
    G ( c_1 ) \colon G ( D_1 ) \simto G H ( C_1 ) \simeq H ' F ( C_1 ),
    F ( \overline{f} ) \colon F ( C_1 ) \to F ( C_2 )
  ).
\end{equation}
Analogous to \cite[Proposition~6.2]{MR2175388} one obtains the following proposition from the property (L).
\begin{proposition}
  \label{proposition:comparison-of-lift-groupoids}
  The map \eqref{equation:map-between-sets-of-lifts} is a bijection.
\end{proposition}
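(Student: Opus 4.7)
The plan is to construct an inverse to the canonical map \eqref{equation:map-between-sets-of-lifts} using the three properties of (L), and then check it is well-defined on isomorphism classes and mutually inverse.

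First, I would address surjectivity. Given an element~$(c_1' \colon G(D_1) \simto H'(C_1'),\,\overline{g} \colon C_1' \to C_2')$ of~$\lift_{H'}(g\mid c_2')$, the existence of~$c_1'$ together with the assumption~$C_2' = F(C_2)$ places us exactly in the situation of (L)(iii) applied to~$C_1'$: it yields an object~$C_1 \in \mathcal{C}$ and an isomorphism~$F(C_1) \simeq C_1'$, which after composing with~$c_1'$ produces an isomorphism~$G(D_1) \simto H'F(C_1) \simeq GH(C_1)$. Because~$G$ is fully faithful, this last isomorphism descends to a unique isomorphism~$c_1 \colon D_1 \simto H(C_1)$. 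Then~$\overline{g}$, viewed as a morphism~$F(C_1) \to F(C_2)$, is by the full faithfulness of~$F$ the image~$F(\overline{f})$ of a unique morphism~$\overline{f} \colon C_1 \to C_2$. The pair~$(c_1,\overline{f})$ is the desired preimage, once one verifies the compatibility~$H(\overline{f})c_1 = c_2 f$; this is done by applying~$G$ to both sides and using that~$G$ is faithful together with the commutativity~$GH \simeq H'F$ and the assumption~$H'(\overline{g})c_1' = c_2' g$.

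For injectivity, suppose two lifts~$(c_1,\overline{f})$ and~$(c_1',\overline{f}')$ of~$f$ have images in~$\lift_{H'}(g\mid c_2')$ which are isomorphic via some~$\psi \colon F(C_1) \simto F(C_1')$. The full faithfulness of~$F$ produces a unique~$\varphi \colon C_1 \simto C_1'$ with~$F(\varphi) = \psi$; the required identities~$\overline{f}' \varphi = \overline{f}$ and~$H(\varphi) c_1 = c_1'$ then follow from the corresponding identities at the level of~$F$- and~$G$-images together with full faithfulness of~$F$ and~$G$ and the natural isomorphism~$GH \simeq H'F$. This shows the map on isomorphism classes is injective, and also that the preimage constructed above is independent of the choice of isomorphism~$F(C_1) \simeq C_1'$.

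The main obstacle is bookkeeping rather than conceptual: the commutativity of \eqref{equation:L-condition} holds only up to a fixed natural isomorphism~$\alpha \colon GH \Rightarrow H'F$, and every compatibility condition between lifts must be transported across~$\alpha$ in a coherent way. In particular, when verifying~$H(\overline{f})c_1 = c_2 f$ from~$H'(\overline{g})c_1' = c_2' g$ one has to insert the appropriate components of~$\alpha$ at~$C_1$ and~$C_2$ and check that the identifications~$F(C_i) \simeq C_i'$ chosen above are compatible with them. Once this is set up carefully (mirroring the argument of \cite[Proposition~6.2]{MR2175388}, which handles the object-level statement), the morphism-level statement follows essentially formally from the full faithfulness of~$F$ and~$G$.
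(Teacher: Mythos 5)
Your argument is correct and is exactly the argument the paper leaves implicit: the paper gives no proof of this proposition, saying only that it is ``analogous to \cite[Proposition~6.2]{MR2175388},'' and your direct verification (use (L)(iii) plus full faithfulness of $G$ to recover $c_1$, full faithfulness of $F$ to recover $\overline{f}$, and faithfulness of $F$ and $G$ together with naturality of $GH\simeq H'F$ to transport the compatibility conditions) is precisely how that analogue is proved. Your remark about carefully inserting the components of the natural isomorphism $\alpha\colon GH\Rightarrow H'F$ is the right thing to worry about, and the computation goes through as you describe.
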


We wish to apply this to the dual setting of \cite[\S6.3]{MR2175388}.
For this, let us consider the setting of flat abelian deformations of \eqref{equation:deformations},
without any restriction on the category.
The dual of \cite[Proposition~6.5]{MR2175388} in our lifting problem for morphisms is the following.

\begin{proposition}
  There is a diagram
  \begin{equation}
    \begin{tikzcd}
      \derived^-(\Pro\overline{\mathcal{C}}) \arrow[d] & \KKK^-(\Proj\Pro\overline{\mathcal{C}})
      \arrow[r, hook] \arrow[l, "\simeq"] \arrow[d] & \KKK(\Proj\Pro\overline{\mathcal{C}}) \arrow[d] \\
      \derived^-(\Pro\mathcal{C}) & \KKK^-(\Proj\Pro\mathcal{C}) \arrow[r, hook] \arrow[l, "\simeq"] &
      \KKK(\Proj\Pro\mathcal{C})
    \end{tikzcd}
  \end{equation}
  where the vertical arrows are the appropriately defined restriction functors. Both squares satisfy (L)
  (see \cref{definition:L}), hence the deformation theory for morphisms with fixed lift of the
  target from \cref{theorem:left-adjoint-restriction} restricts to lifting morphisms in the
  category~$\derived^-(\mathcal{C})$.
\end{proposition}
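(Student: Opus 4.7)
The plan is to verify condition (L) of \cref{definition:L} separately for each of the two squares, and then to apply \cref{proposition:comparison-of-lift-groupoids} twice in sequence in order to transport the lifting bijection of \cref{theorem:left-adjoint-restriction} from $\KKK(\Proj\Pro\overline{\mathcal{C}})$ down to $\derived^-(\Pro\overline{\mathcal{C}})$.

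For the left square, the horizontal arrows are the standard equivalences between the bounded-above homotopy category of projectives and the bounded-above derived category, which hold in the co-Grothendieck categories $\Pro\mathcal{C}$ and $\Pro\overline{\mathcal{C}}$ since these have enough projectives. Commutativity up to natural isomorphism follows from flatness of projective objects in a flat abelian deformation, dual to coflatness of injectives in \cite[Proposition~3.4]{MR2238922}: this makes the left restriction functor termwise exact on complexes of projectives and hence compatible with bounded-above projective resolutions. Since both horizontal arrows are equivalences, condition (L)(iii) is automatic.

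For the right square, commutativity is immediate, and the horizontal inclusions are fully faithful by definition. The content lies entirely in condition (L)(iii): given $C' \in \KKK(\Proj\Pro\overline{\mathcal{C}})$ whose restriction $R \otimes_{\overline{R}} C'$ is isomorphic in $\KKK(\Proj\Pro\mathcal{C})$ to a bounded-above complex of projectives, one must show that $C'$ is itself isomorphic to such a complex in $\KKK(\Proj\Pro\overline{\mathcal{C}})$. The first step is to bound the cohomology of $C'$: using that its components are flat Pro-objects and that $\mathfrak{b}^{2} = 0$, the short exact sequence $0 \to \mathfrak{b} \to \overline{R} \to R \to 0$ yields a triangle
\begin{equation}
  \mathfrak{b} \otimes^{\LL}_{\overline{R}} C' \to C' \to R \otimes^{\LL}_{\overline{R}} C' \to \mathfrak{b} \otimes^{\LL}_{\overline{R}} C'[1]
\end{equation}
in $\derived(\Pro\overline{\mathcal{C}})$. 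Since $\mathfrak{a}\mathfrak{b} = 0$, one identifies $\mathfrak{b} \otimes^{\LL}_{\overline{R}} C' \simeq \mathfrak{b} \otimes^{\LL}_{R_{0}} (R_{0} \otimes^{\LL}_{\overline{R}} C')$, and the bounded-above assumption on $R \otimes^{\LL}_{\overline{R}} C'$ forces both outer terms, and hence $C'$ itself, to have bounded-above cohomology. A bounded-above projective resolution in $\Pro\overline{\mathcal{C}}$ then yields an object $C \in \KKK^-(\Proj\Pro\overline{\mathcal{C}})$ with $C \simeq C'$ in $\derived(\Pro\overline{\mathcal{C}})$.

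The main obstacle is the final step: upgrading the derived-category isomorphism $C \simeq C'$ to an isomorphism in $\KKK(\Proj\Pro\overline{\mathcal{C}})$. This amounts to the dual of the statement that, in a Grothendieck category, the homotopy category of injectives embeds fully faithfully into the unbounded derived category, and should be handled by the same kind of comparison machinery used for lifting of objects in \cite[\S6.3]{MR2175388}. With (L) verified for both squares, two successive applications of \cref{proposition:comparison-of-lift-groupoids} compose to the claimed reduction of the deformation-obstruction calculus from $\KKK(\Proj\Pro\overline{\mathcal{C}})$ to $\derived^-(\Pro\overline{\mathcal{C}})$.
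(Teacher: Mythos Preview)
The paper does not give a direct proof of this proposition; it simply introduces it as the dual of \cite[Proposition~6.5]{MR2175388} and moves on. Your attempt to spell things out is therefore more ambitious than what the paper actually does.

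Your treatment of the left square is fine: both horizontal functors are equivalences, so condition~(L) is automatic. The problem is the right square, specifically the ``main obstacle'' you identify. Your strategy is to show that $C'$ has bounded-above cohomology, replace it by a bounded-above projective resolution $C$, and then promote the isomorphism $C \simeq C'$ in $\derived(\Pro\overline{\mathcal{C}})$ to one in $\KKK(\Proj\Pro\overline{\mathcal{C}})$. For this last step you claim it ``amounts to the dual of the statement that, in a Grothendieck category, the homotopy category of injectives embeds fully faithfully into the unbounded derived category.'' That statement is false: the canonical functor $\KKK(\Inj\mathcal{C}) \to \derived(\mathcal{C})$ is a localization whose kernel consists of the coacyclic complexes, and these are in general nonzero (any non-contractible acyclic complex of injectives gives one). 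Dually, the functor $\KKK(\Proj\Pro\overline{\mathcal{C}}) \to \derived(\Pro\overline{\mathcal{C}})$ kills the contraacyclic complexes and is not faithful. Concretely, the cone of your comparison map $C \to C'$ is an acyclic complex of projectives, but there is no reason for it to be contractible, so the map need not be a homotopy equivalence.

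The point is that condition~(L)(iii) here is a statement purely inside the homotopy categories of projectives, and the detour through $\derived$ loses exactly the information you need. The hypothesis you have is stronger than ``bounded-above cohomology'': it says the restriction $R\otimes_{\overline{R}} C'$ is homotopy equivalent to a bounded-above complex of projectives, i.e.\ it becomes eventually contractible in high degrees. The argument in \cite[\S6.3]{MR2175388} (to which you should simply appeal, as the paper does) exploits this directly via the lifting of projectives and of splittings along the nilpotent extension, staying within the homotopy category throughout.
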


For our purposes we mostly care about perfect complexes on schemes,
in the setting of \cref{example:geometric-setting}.
This requires a further restriction step, which is given by the following proposition.
We will use the notion of Tor-dimension and pseudo-coherent complexes,
see, e.g.,
\cite[\href{https://stacks.math.columbia.edu/tag/08CG}{Tag 08CG}]{stacks-project} and
\cite[\href{https://stacks.math.columbia.edu/tag/08CB}{Tag 08CB}]{stacks-project}.

\begin{proposition}
  \label{proposition:perfect-complexes}
  Let~$\overline{X}, X$ be as in \cref{example:geometric-setting}. There is a diagram
  \begin{equation}
    \begin{tikzcd}
      \Perf\overline{X} \arrow[r, hook] \arrow[d] & \derived_\fTd^-(\Qcoh\overline{X}) \arrow[d]
      \arrow[r, "\cong"] & \derived_{\fTd,\Qcoh\overline{X}}^-(\Pro\Qcoh\overline{X}) \arrow[d] \arrow[r,
      hook] & \derived^-(\Pro\Qcoh\overline{X}) \arrow[d] \\
      \Perf X \arrow[r, hook] & \derived_\fTd^-(\Qcoh X) \arrow[r, "\cong"] & \derived_{\fTd,\Qcoh
      X}^-(\Pro\Qcoh X) \arrow[r, hook] & \derived^-(\Pro\Qcoh X)
    \end{tikzcd}
  \end{equation}
  where the vertical arrows are the appropriately defined restriction functors. All three squares satisfy
  (L) (see \cref{definition:L}).
\end{proposition}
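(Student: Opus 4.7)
The plan is to verify conditions (i)--(iii) of~(L) (see \cref{definition:L}) for each of the three squares separately. All vertical arrows are derived restriction (pullback) functors along the nilpotent closed immersion $X \hookrightarrow \overline{X}$ and its analogue on the Pro-completion, while the horizontal arrows are either inclusions of full triangulated subcategories or equivalences. Commutativity (condition~(i)) holds because derived pullback preserves perfectness, pseudo-coherence, finite Tor-dimension, and membership in the relevant subcategory of the Pro-completion; the equivalence in the middle square is moreover natural in the underlying abelian category, making it compatible with restriction. Fully faithfulness (condition~(ii)) is automatic: in the outer squares the horizontal arrows are inclusions of full triangulated subcategories, and in the middle square they are equivalences.

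The real content is condition~(iii). For the first square we must show that if $C' \in \derived_\fTd^-(\Qcoh \overline{X})$ and $R \otimes^\LL_{\overline{R}} C' \in \Perf X$, then $C' \in \Perf \overline{X}$. Since $C'$ already has finite Tor-dimension by hypothesis, and since on quasicompact semiseparated schemes perfectness coincides with pseudo-coherence plus finite Tor-dimension, it suffices to descend pseudo-coherence along the nilpotent closed immersion $X \hookrightarrow \overline{X}$; this is a standard result, which can be reduced by a devissage on the powers of~$\mathfrak b$ to the case of a square-zero thickening. For the middle square, condition~(iii) is trivial since the horizontal arrows are equivalences. For the third square we must show that if $C' \in \derived^-(\Pro \Qcoh \overline{X})$ has its restriction in $\derived_{\fTd, \Qcoh X}^-(\Pro \Qcoh X)$, then $C'$ already lies in $\derived_{\fTd, \Qcoh \overline{X}}^-(\Pro \Qcoh \overline{X})$. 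This splits into two descent statements: the cohomology objects of~$C'$ lie in $\Qcoh \overline{X}$ rather than merely in $\Pro \Qcoh \overline{X}$, and $C'$ has finite Tor-dimension. Both can be reduced to the object-level analogues carried out in \cite[\S 6]{MR2175388} by passing to cohomology and iterating the nilpotent step.

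The main obstacle I expect is the third square, since one must simultaneously track two subcategory conditions (quasicoherence and finite Tor-dimension) as they descend through the Pro-completion along nilpotent thickenings. Once the relevant ingredients from \cite[\S 6]{MR2175388} are assembled, however, the verification becomes essentially formal, and the equivalence appearing in the middle square can be extracted from, or reproved in the style of, the object-level statements in that reference.
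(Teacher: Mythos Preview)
Your proposal is correct and follows essentially the same route as the paper: reduce the left square to descent of pseudo-coherence via the characterisation perfect $=$ pseudo-coherent $+$ finite Tor-dimension, and handle the remaining squares through the results of \cite[\S6]{MR2175388}. The paper is slightly more streamlined in two respects: it dispatches both the middle and the right square at once by observing that they are dual to \cite[Proposition~6.9]{MR2175388}, and for the left square it writes down the explicit triangle $\mathfrak{b}\otimes_{\overline{R}}^{\mathbf{L}}\overline{E}\to\overline{E}\to R\otimes_{\overline{R}}^{\mathbf{L}}\overline{E}$ and invokes the 2-out-of-3 property for pseudo-coherence---note that no d\'evissage on powers of~$\mathfrak{b}$ is needed, since $\mathfrak{b}^{2}=0$ is already part of the standing hypotheses.
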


\begin{proof}
  The fact that the middle and the right square satisfy (L) is dual to \cite[Proposition~6.9]{MR2175388}.

  For the left square we use the characterization of perfect complexes
  as pseudo-coherent complexes of finite Tor-dimension,
  from \cite[\href{https://stacks.math.columbia.edu/tag/08CQ}{Tag 08CQ}]{stacks-project}.
  To prove that the left square satisfies~(L),
  we consider an object~$\overline{E}$ in~$\derived_\fTd^-(\Qcoh\overline{X})$,
  such that its restriction~$R\otimes_{\overline{R}}^\LL\overline{E}$
  lies in the essential image from the inclusion of~$\Perf X$.
  It suffices to prove that~$\overline{E}$ is pseudo-coherent.

  To see this we use the~2-out-of-3 property for pseudo-coherence from
  \cite[\href{https://stacks.math.columbia.edu/tag/08CD}{Tag 08CD}]{stacks-project}, and consider the triangle
  \begin{equation}
    \mathfrak{b}\otimes_{\overline{R}}^\LL\overline{E}\to\overline{E}\to
    R\otimes_{\overline{R}}\overline{E}\to\mathfrak{b}\otimes_{\overline{R}}^\LL\overline{E}[1]
  \end{equation}
  induced by taking the derived tensor product the short exact
  sequence~$0\to\mathfrak{b}\to\overline{R}\to R\to 0$ with~$\overline{E}$. By assumption we have
  that~$R\otimes_{\overline{R}}\overline{E}$ is perfect, hence pseudo-coherent.

  To see that~$\mathfrak{b}\otimes_{\overline{R}}^\LL \overline{E}$ is pseudo-coherent, we consider the isomorphism
  \begin{equation}
    \mathfrak{b}\otimes_{\overline{R}}^\LL\overline{E}
    \cong
    \mathfrak{b}\otimes_R^\LL(R\otimes_{\overline{R}}^\LL\overline{E})
  \end{equation}
  using the fact that~$\mathfrak{b}^2=0$ to conclude that~$\overline{E}$ is itself pseudo-coherent, hence
  it lies in the essential image of the inclusion of~$\Perf\overline{X}$ in~$\derived_\fTd^-(\Qcoh\overline{X})$.
\end{proof}

\begin{proof}[Proof of \cref{corollary:lifting-morphisms-of-perfect-complexes}]
  This follows from \cref{theorem:left-adjoint-restriction}
  and \cref{proposition:perfect-complexes}.
\end{proof}

We end this section with the following remarks.
\begin{remark}
  \label{remark:RHom-over-field}
  In the setting where~$R_0$ is semisimple, we can make the following simplification: we have an isomorphism
  \begin{equation}
    \RRHom_{X_0}(F_0, \mathfrak{b} \otimes_{R_0}^\LL H_0) \simeq \RRHom_{X_0}(F_0,H_0) \otimes_{R_0} \mathfrak{b},
  \end{equation}
  and the computations reduce to determining the usual Ext-groups of the objects which are involved.
\end{remark}

\begin{remark}
  \label{remark:recovering-lieblich}
  As explained in \cref{remark:recover-deformation-theory-complexes} we can recover the deformation
  theory of complexes from our deformation theory of complexes with fixed lift of the target. Namely we
  recover \cite[Theorem~3.1.1]{MR2177199} (for perfect complexes) as in \cref{remark:recover-deformation-theory-complexes} by taking~$G_0=0$ and using the zero object as the
  fixed lift of the target.
  Then condition in \cref{item:perfect-torsor} of \cref{corollary:lifting-morphisms-of-perfect-complexes}
  translates to the gluability condition in \cite{MR2177199}.
\end{remark}

\section{Uniqueness of deformations of semiorthogonal decompositions}
\label{section:unique-deformation-decomposition-triangles}
As mentioned in the introduction,
our initial motivation for developing the deformation theory in \cref{section:KInj-deformations,section:restriction}
was to prove that semiorthogonal decompositions deform uniquely in smooth and proper families of schemes.
We will establish this uniqueness of deformations of semiorthogonal decompositions
as a uniqueness result for
deformations of decomposition triangles.

First we recall some following notation,
before we can state \cite[Definition~5.2]{2002.03303}.
For an object (or kernel)~$K\in\Perf\cX\times_U\cX$
we have the associated Fourier--Mukai functor
\begin{equation}
  \Phi_K\colon\Perf\cX\to\Perf\cX:E\mapsto\mathbf{R}\mathrm{pr}_{2,*}(\pr_1^*(E)\otimes^{\mathbf{L}}K),
\end{equation}
and we denote its essential image by~$\cE_K\subseteq\Perf\cX$.

Recall also from \cite[Lemma~2.1]{2002.03303}
that~$\mathcal{O}_{\Delta_f}\in\Perf\cX\times_U\cX$
if~$f\colon\cX\to U$ is a smooth and separated morphism
of quasicompact and semiseparated schemes.
If~$\langle\cA,\cB\rangle$ is a~$U$-linear semiorthogonal decomposition
of~$\Perf\cX$,
then by base change of the semiorthogonal decomposition (see the proof of \cite[Lemma~5.4]{2002.03303})
one obtains a semiorthogonal decomposition of~$\Perf\cX\times_U\cX$.
Decomposing the structure sheaf of the diagonal
gives a distinguished triangle
\begin{equation}
  \begin{tikzcd}
    K_{\cB}\arrow{r} & \Ocal_{\Delta_{f}} \arrow{r} & K_{\cA} \arrow{r} & K_{\cB}[1],
  \end{tikzcd}
\end{equation}
for which~$\RRlHom(\cE_{K_{\cB}},\cE_{K_{\cA}})=\RRlHom(\cB,\cA)=0$
(meaning it is zero for all choices of objects in those categories),
which we refer to as the \emph{decomposition triangle}.
The correspondence between semiorthogonal decompositions
and decomposition triangles
is explained by \cite[Theorem~5.7]{2002.03303},
and this allows us to work with decomposition triangles in what follows.

\begin{definition}
  \label{definition:dec-functor}
  Let~$f\colon\cX\to U$ be a smooth and proper morphism of schemes.
  We define the functor~$\dec_{\Delta_f}\colon\Aff_U^{\op}\to\Sets$
  of \emph{decompositions of the diagonal}
  by sending an affine~$U$-scheme~$\phi\colon V\to U$
  to the set of distinguished triangles
  \begin{equation}
    \label{equation:decomposition-triangle}
    \begin{tikzcd}
      \zeta\colon\qquad K_2\arrow{r} & \Ocal_{\Delta_{f_V}} \arrow{r} & K_1 \arrow{r} & K_2[1]
    \end{tikzcd}
  \end{equation}
  in $\Perf \Ycal_V$, such that
  \begin{equation}
    \RRlHom_{f_V}(\mathscr E_{K_2},\mathscr E_{K_1}) = 0,
  \end{equation}
  up to the equivalence relation
  given by identifying \eqref{equation:decomposition-triangle}
  with
  \begin{equation}
    \begin{tikzcd}
      \zeta'\colon\qquad K'_2\arrow{r} & \Ocal_{\Delta_{f_V}} \arrow{r} & K'_1 \arrow{r} & K'_2[1]
    \end{tikzcd}
  \end{equation}
  whenever there is an isomorphism of distinguished triangles
  \begin{equation}
    \label{equation:isomorphism-triangles}
    \begin{tikzcd}
      K_2\arrow{r}\isoarrow{d} & \Ocal_{\Delta_{f_V}} \arrow{r}\arrow[equal]{d} & K_1 \arrow{r}\isoarrow{d} & K_2[1]\isoarrow{d} \\
      K'_2\arrow{r} & \Ocal_{\Delta_{f_V}} \arrow{r} & K'_1 \arrow{r} & K'_2[1]
    \end{tikzcd}
  \end{equation}
  where we insist that the middle objects are connected by the identity morphism.
\end{definition}

\begin{remark}\label{remark:uniqueness of the isomorphism}
  The isomorphisms
  \(
    K _{ i } \simto K ' _{ i }
  \)
  for \( i = 1, 2 \)
  in \eqref{equation:isomorphism-triangles}
  are unique if they exist. Indeed, this follows from \cite[Lemma~5.5]{2002.03303}
  which implies that a decomposition of the diagonal coincides with
  the decomposition triangle associated to the base change of the semiorthogonal decomposition
  \(
    \langle
    \cE_{K_{\cA}},
    \cE_{K_{\cB}}
    \rangle
  \); note that the decomposition of an object with respect to a semiorthogonal decomposition is unique up to unique isomorphism.
\end{remark}

The proof of \cref{theorem:uniqueness-deformation-decomposition-triangles}
consists of two steps:
a deformation argument (using \cref{corollary:lifting-morphisms-of-perfect-complexes})
and an algebraization argument.

\begin{proof}[Proof of \cref{theorem:uniqueness-deformation-decomposition-triangles}]
  By replacing $f$ with its base change by~$\Spec R \to U$,
  we may without loss of generality assume that $U = \Spec R$.
  To emphasise the base, we will write~$f_R\colon\cX_R\to\Spec R$.
  We will also write~$\Ycal_R\defeq\Xcal\times_R\Xcal$
  and~$g_R\colon\Ycal\to\Spec R$.

  Let us define the inverse $\vartheta^{-1}$.
  An element
  \begin{equation}
    \xi_{\mathfrak m} \in \dec_{\Delta_f}(\Spec \bfk \to U)
  \end{equation}
  corresponds to a semiorthogonal decomposition
  \begin{equation}
    \label{equation:A-B-sod}
    \derived^\bounded(X) = \Perf X = \langle \cA,\cB \rangle
  \end{equation}
  for the $\bfk$-scheme $X = \cX\times_U\Spec \bfk$.
  This determines the decomposition triangle
  \begin{equation}
    \label{equation:starting-decomposition-triangle}
    \begin{tikzcd}
      K_{\cB} \arrow{r}{s_0} &  \Ocal_{\Delta_X} \arrow{r} & K_{\cA}
    \end{tikzcd}
  \end{equation}
  in~$\derived^\bounded(X\times_{ \bfk } X)=\Perf (X\times_{ \bfk } X)$.
  For $n\geq 0$ set $R_n = R/\mathfrak m^{n+1}$.
  We consider the base change diagrams
  \begin{equation}
    \label{equation:cartesian-diagram-Xn}
    \begin{tikzcd}
      \Xcal_{R_n} \arrow[hook]{r} \arrow[swap, "f_{R_n}"]{d} &
      \Xcal_R \arrow["f_R"]{d} \\
      \Spec R_n \arrow[hook]{r} &
      \Spec R
    \end{tikzcd}
  \end{equation}
  and
  \begin{equation}
    \label{equation:cartesian-diagram-Yn}
    \begin{tikzcd}
      \Ycal_{R_n} \arrow[hook]{r} \arrow[swap, "g_{R_n}"]{d} &
      \Ycal_R \arrow["g_R"]{d} \\
      \Spec R_n \arrow[hook]{r} &
      \Spec R
    \end{tikzcd}
  \end{equation}
  induced by the closed immersion $\Spec R_n \into \Spec R$.
  Since we know that
  \begin{equation}\label{equation:vanishing on the central fiber}
    \RRHom_{X\times_\bfk X}^\bullet(K_{\cB},K_{\cA}) = 0
  \end{equation}
  by the semiorthogonality \eqref{equation:A-B-sod},
  starting with $n=0$ we can inductively apply \cref{corollary:lifting-morphisms-of-perfect-complexes}
  (see also \cref{remark:RHom-over-field}) to
  \begin{equation}
    (\overline{R} \onto R \onto R_0) = (R_{n+1}\onto R_n\onto R_0 = \bfk)
  \end{equation}
  and the morphism~$K_\cB\to\Ocal_{\Delta_X}$
  from \eqref{equation:starting-decomposition-triangle},
  to obtain the existence of a unique lift $s_{n+1}$ of $s_n$.
  The fixed lift $\overline{G}$ in \cref{corollary:lifting-morphisms-of-perfect-complexes}
  i (at the $n$th step) taken to be the complex
  \begin{equation}
    \Ocal_{\Delta_{n+1}} \defeq \Ocal_{\Delta_f}\big|_{R_{n+1}} = \Ocal_{\Delta_{f_{n+1}}} \in \Perf \Ycal_{R_{n+1}}.
  \end{equation}
  Thus for each $n$,
  we obtain a morphism $s_n\colon K _{ \cB _{ R _{ n } } } \to \Ocal_{\Delta_{f_n}}$
  which is a deformation of $s_0$. Let
  \(
    K _{ \cA _{ R _{ n } } }
  \)
  be the cone of the morphism~$s_n$,
  i.e.,
  \begin{equation}\label{equation:n-th deformation}
    K _{ \cB _{ R _{ n } } }
    \to
    \Ocal_{\Delta_{f_n}}
    \to
    K _{ \cA _{ R _{ n } } }
    \to
    K _{ \cB _{ R _{ n } } } [1].
  \end{equation}
  We wish to verify it is again a decomposition of the diagonal.
  We have that
  \begin{equation}
    \label{equation:n-th-RHom-vanishing}
    \RRlHom_{f_{ R _{ n } }}(\mathscr E_{K_{\cB_{R _{ n }}}},\mathscr E_{K_{\cA_{R _{ n }}}}) = 0,
  \end{equation}
  as by the derived Nakayama lemma
  (see, e.g., \cite[Lemma~2.2]{2002.03303})
  it is enough to show the vanishing
  \begin{equation}
    \RRlHom_{f_{ R _{ n } }}(\mathscr E_{K_{\cB_{ R _{ n } }}},\mathscr E_{K_{\cA_{ R _{ n } }}})\otimes ^{ \mathbf{L} } _{ R _{ n } }\bfk = 0.
  \end{equation}
  Since $f_{ R _{ n } }$ is flat, the cartesian diagram
  \begin{equation}
    \label{equation:cartesian-diagram}
    \begin{tikzcd}
      \cX_{ \bfk } \MySymb{dr} \arrow[r, "\iota"] \arrow[d, "f_{ \bfk }"'] & \cX_{ R _{ n } } \arrow[d, "f_{ R _{ n } }"]\\
      \Spec \bfk \arrow{r} & \Spec R _{ n }
    \end{tikzcd}
  \end{equation}
  (which is also the case~$n=0$ of \eqref{equation:cartesian-diagram-Xn})
  is exact.
  Hence we can compute
  \begin{equation}
    \begin{aligned}
      \RRlHom_{f_{ R _{ n } }}(\mathscr E_{K_{\cB_{ R _{ n } }}},\mathscr E_{K_{\cA_{ R _{ n } }}})
      \otimes ^{ \mathbf{L} } _{ R _{ n } }\bfk
      &\simeq
      \mathbf{R} f_{ \bfk,\ast } \mathbf{L} \iota^{ \ast }
      \RRlHom_{\Xcal_{ R _{ n } }}(\mathscr E_{K_{\cB_{ R _{ n } }}},\mathscr E_{K_{\cA_{ R _{ n } }}}) \\
      &\simeq
      \RRHom_{ \cX_{ \bfk } }
      \left( \mathscr E_{K_{\cB_{ R _{ n } }}} \big|_{ \cX_{ \bfk } },\mathscr E_{K_{\cA_{ R _{ n } }}} \big|_{ \cX_{ \bfk } } \right).
    \end{aligned}
  \end{equation}
  Again by using the exact cartesian diagram \eqref{equation:cartesian-diagram}, one can confirm that
  \begin{equation}
    \mathscr E_{K_{\cB_{ R _{ n } }}}\big|_{ \cX_{ \bfk } }
    \subseteq
    \mathscr E_{ K_{ \cB } }
    =
    \cB,
    \quad
    \mathscr E_{K_{\cA_{ R _{ n } }}} \big|_{ \cX_{ \bfk } }
    \subseteq
    \mathscr E_{ K_{ \cA } }
    =
    \cA.
  \end{equation}
  Thus we obtain the desired vanishing \eqref{equation:n-th-RHom-vanishing}.
  Therefore the distinguished triangle \eqref{equation:n-th deformation}
  represents an $R _{ n }$-valued point of the functor $\dec_{\Delta_f}$.

  To finish the proof, we need to algebraize the formal deformation~$(s_n)_{n\geq 0}$.
  From the sequence~$(s_n)_{n\geq 0}$
  we obtain the sequence of deformations $(K_{n})_{n\geq 0}$ of $K_{\cB}$.
  By \cite[Proposition~3.6.1]{MR2177199},
  there exists a perfect complex $K_{\cB_{R}}$ on $\Ycal_R$
  with compatible isomorphisms $K_{\cB_{R}}|_{\Ycal_{R_{n}}}\, \widetilde{\to}\,K_{n}$.

  Next,
  we need to lift the sequence~$(s_n)_{n\geq0}$ to a morphism
  \begin{equation}
    \label{equation:morphism-over-R}
    \begin{tikzcd}
      s\colon K_{\cB_R} \arrow{r} & \Ocal_{\Delta_R},
    \end{tikzcd}
  \end{equation}
  where the target is the structure sheaf of the diagonal $\Xcal_R \hookrightarrow \Ycal_R$,
  and verify that we obtain an~$R$-valued point of~$\dec_{\Delta_f}$.
  To construct the morphism~$s$,
  consider the complex
  \begin{equation}
    \mathcal H = \RRlHom_{\Ycal_R}(K_{\cB_R},\Ocal_{\Delta_R}) \in \derived_{\coh}^{\textrm b}(\Ycal_R).
  \end{equation}
  Since $R$ is a complete noetherian local ring,
  we can identify $\coh(\Spf R)$ with $\coh(\Spec R)$.
  After this identification is made,
  we apply the comparison theorem in formal geometry
  to the complex $\mathcal H$:
  by \cite[Theorem~8.2.2]{MR2223409} and \cite[Remark~8.2.3 (a)]{MR2223409},
  the natural map \cite[(8.2.1.3)]{MR2223409} is an isomorphism.
  This means that we have an isomorphism
  \begin{equation}
    \label{equation:comparison-theorem}
    \begin{tikzcd}
      \mathrm{R}^0g_{R,\ast} \mathcal H \arrow{r}{\sim} & \varprojlim \mathrm{R}^0g_{R_n,\ast}\mathcal H_n,
    \end{tikzcd}
  \end{equation}
  where
  \begin{equation}
    \mathcal H_n = \mathcal H\big|_{Y_{R_n}} = \RRlHom_{\Ycal_{R_n}}(K_n,\Ocal_{\Delta_n}).
  \end{equation}
  Taking global sections
  in \eqref{equation:comparison-theorem} yields
  \begin{equation}
    \begin{aligned}
      \Hom_{\Ycal_R}(K_{\cB_R},\Ocal_{\Delta_R})
      &\simeq \Gamma(\Spec R,\varprojlim \mathrm{R}^0g_{R_n,\ast}\mathcal H_n) \\
      &\simeq \varprojlim \Gamma(\Spec R_n,\mathrm{R}^0g_{R_n,\ast}\RRlHom_{\Ycal_{R_n}}(K_n,\Ocal_{\Delta_n})) \\
      &\simeq \varprojlim \Hom_{\Ycal_{R_n}}(K_n,\Ocal_{\Delta_n}),
    \end{aligned}
  \end{equation}
  where we have used that limits commute with taking global sections.
  Therefore the sequence $(s_n)_n$ constructed above yields
  a morphism $s$ as in \eqref{equation:morphism-over-R}.

  Let $K_{\cA_R}$ be the cone of $s$, so we get a distinguished triangle
  \begin{equation}
    \label{equation:exact-triangle-deformed}
    K_{\cB_R} \xrightarrow{s} \Ocal_{\Delta_R} \to K_{\cA_R} \to  K_{\cB_R} [ 1 ].
  \end{equation}
  By replacing
  \(
    R _{ n }
  \)
  with~$R$ in the previous argument
  (showing that \eqref{equation:n-th deformation} is indeed a decomposition of the diagonal),
  we similarly obtain the vanishing
  \begin{equation}
    \label{equation:RHom-vanishing}
    \RRlHom_{f_R}(\mathscr E_{K_{\cB_R}},\mathscr E_{K_{\cA_R}}) = 0.
  \end{equation}
  Therefore the distinguished triangle \eqref{equation:exact-triangle-deformed}
  represents an $R$-valued point of the functor $\dec_{\Delta_f}$.

  It remains to prove the well-definedness of $\vartheta^{-1}$;
  i.e.,
  we have to show that the morphism \eqref{equation:morphism-over-R}
  is unique up to isomorphisms.
  The uniqueness of the algebraization does not seem to be contained in
  the statement of \cite[Proposition 3.6.1]{MR2177199},
  and it is not clear to the authors whether the proof of loc.~cit.~implies it or not.
  To directly verify the uniqueness in our setting,
  let~$t \colon K \to \cO_{ \Delta_R }$
  be another lift of~$s_0$.
  The vanishing~\eqref{equation:vanishing on the central fiber} and \cref{corollary:lifting-morphisms-of-perfect-complexes} implies that for each~$n \geq 0$ there exists an isomorphism
  \(
    c _{ n }
    \colon
    K_{\cB_R} \vert_{ \cY_{R_n} } \simto K \vert_{ \cY_{R_n} }
  \)
  which satisfies~$s \vert_{ \cY_{R_n} } = t \vert_{ \cY_{R_n} } \circ c_{n}$. Since isomorphisms of decomposition triangles are unique
  (see \cref{remark:uniqueness of the isomorphism}), for each~$n \geq 0$ we have that
  \(
    c_{ n + 1 } \vert_{ \cY_{R_n} } = c_{ n }
  \).
  Thus we have obtained a compatible system of isomorphisms
  \begin{equation}
    \left(c_{n} \colon K_{\cB_R} \vert_{ \cY_{R_n} }
      \simto
    K \vert_{ \cY_{R_n} }\right)_{ n \geq 0}
    \in
    \varprojlim \Hom_{\Ycal_{R_n}}
    \left(
      K_{\cB_R} \vert_{ \cY_{R_n} }, K \vert_{ \cY_{R_n} }
    \right).
  \end{equation}
  Again by the comparison theorem we obtain the isomorphism
  \begin{equation}
    \begin{tikzcd}
      \Hom_{ \cY_R } ( K_{\cB_R}, K )
      \arrow{r}{\sim} &
      \varprojlim \Hom_{\Ycal_{R_n}} (K_{\cB_R} \vert_{ \cY_{R_n} }, K \vert_{ \cY_{R_n} }),
    \end{tikzcd}
  \end{equation}
  from which we obtain the morphism~$c \colon K_{\cB_R} \to K$
  corresponding to~$( c_{n} )_{ n \geq 0}$.
  We can similarly confirm~$c^{ - 1 } = \varprojlim c_{n}^{ - 1 }$
  and~$s = t \circ c$, concluding the proof.
\end{proof}

\bibliographystyle{amsplain}
\providecommand{\bysame}{\leavevmode\hbox to3em{\hrulefill}\thinspace}
\providecommand{\MR}{\relax\ifhmode\unskip\space\fi MR }
\providecommand{\MRhref}[2]{%
  \href{http://www.ams.org/mathscinet-getitem?mr=#1}{#2}
}
\providecommand{\href}[2]{#2}

\medskip
\emph{Pieter Belmans}, \url{p.belmans@uu.nl} \\
\textsc{Mathematical Institute, Utrecht University, Budapestlaan 6, 3584 CD Utrecht, The Netherlands} \\

\emph{Wendy Lowen}, \texttt{wendy.lowen@uantwerpen.be} \\
\textsc{Department of Mathematics, University of Antwerp, Middelheimlaan 1, 2020 Antwerpen, Belgium} \\

\emph{Shinnosuke Okawa}, \texttt{okawa@math.sci.osaka-u.ac.jp} \\
\textsc{Department of Mathematics, Graduate School of Science, Osaka University, Machikaneyama 1-1,
Toyonaka, Osaka 560-0043, Japan} \\

\emph{Andrea T.~Ricolfi}, \texttt{aricolfi@sissa.it} \\
\textsc{Scuola Internazionale Superiore di Studi Avanzati (SISSA), Via Bonomea 265, 34136 Trieste, Italy}

\end{document}